\newtheorem {prop}{Proposition}
\newtheorem {rem}{Remark}
\newtheorem{theorem}{Theorem}
\newtheorem{lem}{Lemma}
\newtheorem{defn}{Definition}
\newproof{proof}{Proof}
\numberwithin{table}{section}
\numberwithin{equation}{section}
\numberwithin{figure}{section}
\newcommand{\BA}[1]{\textcolor{blue}{[BA: #1]}}
\newcommand{\REAL}{\ensuremath{\mathbb{R}}}
\providecommand{\norm}[1]{\lVert#1\rVert}
\newcommand{\comment}[1]{} 
\newcommand{\bdC}{\mathbf{C}}
\newcommand{\bdu}{\mathbf{u}}
\newcommand{\bdx}{\mathbf{x}}
\newcommand{\bdxp}{\mathbf{x^\prime}}
\newcommand{\bdb}{\mathbf{b}}
\newcommand{\bdv}{\mathbf{v}}
\newcommand{\bdzero}{\mathbf{0}}
\newcommand{\bdxi}{\ensuremath{\mbox{\boldmath$\xi$}}}
\newcommand{\OmegaNLC}{\overline{\overline{\Omega}}} 
\newcommand{\OmegaNLCi}{\overline{\overline{\Omega^{(i)}}}}
\newcommand{\horizon}{\mathcal{H}_\bdx}
\newcommand{\myChi}{\chi_\delta(\bdx - \bdxp)}
\journal{Applied Mathematics and Computation}
\begin{document}

\begin{frontmatter}

    \title{Variational Theory and Domain Decomposition for Nonlocal Problems}

    \author[tobb,lsu]{Burak Aksoylu\fnref{fn1}}
    \ead{baksoylu@etu.edu.tr}

    \author[sandia]{Michael L. Parks\corref{cor1}\fnref{fn2}}
    \ead{mlparks@sandia.gov}

    \cortext[cor1]{Corresponding author}

    \fntext[fn1]{B. Aksoylu was supported in part by NSF DMS-1016190 and his visits to Sandia National Laboratories were partially supported by NSF LA EPSCoR and Louisiana Board of Regents LINK program.}

    \fntext[fn2]{Sandia National Laboratories is a multi-program laboratory operated by Sandia Corporation, a wholly owned subsidiary of Lockheed Martin company, for the U.S. Department of Energy's National Nuclear Security Administration under contract DE-AC04-94AL85000.}

    \address[tobb]{TOBB University of Economics and Technology, Department of Mathematics, Ankara, 06560, Turkey}
    \address[lsu]{Louisiana State University, Department of Mathematics, Baton Rouge, LA 70803-4918 USA}
    \address[sandia]{Sandia National Laboratories, Applied Mathematics and Applications, P.O. Box 5800, MS 1320, Albuquerque, NM 87185-1320 USA}

\comment{
\author{Burak Aksoylu\affil{1}$^,$\affil{2}\comma\corrauth\comma\footnotemark[2], Michael L. Parks\affil{3}\comma\footnotemark[3]}

\address{
\affilnum{1}\ TOBB University of Economics and Technology, Department of Mathematics, Ankara, 06560, Turkey.\\
\affilnum{2}\ Department of Mathematics, Louisiana State University, Baton Rouge, LA 70803-4918 USA.\\
\affilnum{3}\ Applied Mathematics and Applications Department, Sandia National Laboratories, P.O. Box 5800, MS 1320, Albuquerque, NM 87185-1320 USA.
}

\footnotetext[2]{E-mail: baksoylu@etu.edu.tr}

\footnotetext[3]{Sandia National Laboratories is a multi-program laboratory operated by Sandia Corporation, a wholly owned subsidiary of Lockheed Martin company, for the U.S. Department of Energy's National Nuclear Security Administration under contract DE-AC04-94AL85000.}

\corraddr{Burak Aksoylu, TOBB University of Economics and Technology,
Department of Mathematics, Ankara, 06560, Turkey.}

\cgs{B. Aksoylu was supported in part by NSF DMS-1016190 and his visits to Sandia National Laboratories were partially supported by NSF LA EPSCoR and Louisiana Board of Regents LINK program.}

\received{27 August 2010}
\revised{}
\noaccepted{}
}

\begin{abstract}
In this article we present the first results on domain decomposition methods for nonlocal operators. We present a nonlocal variational formulation for these operators and establish the well-posedness of associated boundary value problems, proving a nonlocal Poincar\'{e} inequality. To determine the conditioning of the discretized operator, we prove a spectral equivalence which leads to a mesh size independent upper bound for the condition number of the stiffness matrix.  We then introduce a nonlocal two-domain variational formulation utilizing nonlocal transmission conditions, and prove equivalence with the single-domain formulation. A nonlocal Schur complement is introduced. We establish condition number bounds for the nonlocal stiffness and Schur complement matrices. Supporting numerical experiments demonstrating the conditioning of the nonlocal one- and two-domain problems are presented.
\end{abstract}

\begin{keyword}
Domain decomposition \sep nonlocal substructuring \sep nonlocal operators \sep nonlocal Poincar\'{e} inequality \sep p-Laplacian \sep peridynamics \sep nonlocal Schur complement \sep condition number.
\end{keyword}

\end{frontmatter}


\comment{
\section*{Big Picture}
\BA{This is a summary of the nonlocal theory we provide in this paper.
You can rewrite the intro and other parts under the light of
the below summary.\\
First, we provide a variational theory for nonlocal problems.  With
this theory, for PD realistic kernel functions, we prove the
well-posedness of the weak formulation of nonlocal BVPs with Dirichlet
BC.  In addition, we prove a spectral equivalence estimate which leads
to a mesh size independent upper bound for the condition number of the
stiffness matrix. \\
Secondly, we construct a nonlocal domain decomposition
framework in which prove the equivalence of single domain and
two-domain decompositions. We also prove that the condition number
of the resulting Schur complement matrix is no worse than the that
of the stiffness matrix.\\
Thirdly, we support our conditioning theory with numerical experiments.
}
}

\section{Introduction} \label{sec:Introduction}

Domain decomposition methods where the subdomains do not overlap are called \emph{substructuring methods}, reflecting their origins and long use within the structural analysis community \cite{Przemieniecki:1963:Substructure}. These methods solve for unknowns only along the interface between subdomains, thus decoupling these domains from each other and allowing each subdomain to then be solved independently. One may solve for the primal field variable on the interface, generating a Dirichlet boundary value problem on each subdomain (these are Schur complement methods, see \cite{Smith:1996:DD} and references cited therein), or solve for the dual field variable on the interface, generating a Neumann boundary value problem on each subdomain (these are dual Schur complement methods, see \cite{Farhat:1991:FETI,Farhat:1994:CMA,Farhat:2000:SecondGenerationFETI,Rixen:1999:HeterogeneousScaling}). Hybrid dual-primal methods have also been developed \cite{Farhat:2001:FETI-DP}.

As domain decomposition methods are frequently employed on massively parallel computers, only \emph{scalable} methods are of interest, meaning that the condition number of the interface problem does not grow (or, only grows weakly) with the number of subdomains. Scalable or weakly scalable methods are generated by application of an appropriate preconditioner to the interface problem. This preconditioner requires the solution of a coarse problem to propagate error globally; see any of the references \cite{Bramble:1986:BPS,Bramble:1987:BPS,Bramble:1988:BPS,Bramble:1989:BPS,Farhat:1994:OptimalConvergenceDirichlet,Mandel:1993:BDD,Klawonn:2001:FETI,Dohrmann:2003:BDDC}. For a general overview of domain decomposition, the reader is directed to the excellent texts \cite{Smith:1996:DD,Quarteroni:1999:DDforPDEs,Toselli:2005:DDBook}.

All of the methods referenced above have in common that they are domain decomposition approaches for \emph{local} problems. In this article, we propose and study a domain decomposition method for the \emph{nonlocal} Dirichlet boundary value problem
\begin{align} \label{eq:1DomainPDstrong}
 \mathcal{L}(\bdu) = \bdb(\bdx), \qquad \bdx \in \Omega,
\end{align}
where
\begin{align} \label{eq:strongOp}
 \mathcal{L}(\bdu) := -\int_{\Omega \cup \mathcal{B}\Omega} \bdC (\bdx,\bdxp) \, [\bdu(\bdxp) - \bdu(\bdx)] \, d \bdxp.
\end{align}
Let $n$ and $d$ denote the dimensions of the function space and
the spatial domain, respectively. $\Omega \subset \REAL^d$ is a bounded
domain, $\mathcal{B}\Omega$ is given in \eqref{eq:BoundaryOmega},
$\bdb$ is given, and $\bdu(\bdx) \in \REAL^n$ is prescribed for $\bdx
\in \REAL^d \backslash \Omega$.  We prescribe the value of
$\bdu(\bdx)$ outside $\Omega$ and not just on the boundary of
$\Omega$, owing to the nonlocal nature of the problem.

Nonlocal models are useful where classical (local) models cease to be
predictive. Examples include porous media flow
\cite{Cushman:1993:NonlocalReactiveFlow,Dagan:1994:NonlocalReactiveFlow,sinhaEwingLazarov:2006:parabolicIntegroDE},
turbulence \cite{Bakunin:2008:Turblence}, fracture of solids, stress
fields at dislocation cores and cracks tips, singularities present at
the point of application of concentrated loads (forces, couples, heat,
etc.), failure in the prediction of short wavelength behavior of
elastic waves, microscale heat transfer, and fluid flow in microscale
channels~\cite{Eringen:2002:NonlocalBook}. These are also cases where
microscale fields are nonsmooth. Consequently, nonlocal models are
also useful for multiscale modeling. Recent examples of nonlocal
multiscale modeling include the upscaling of molecular dynamics to
nonlocal continuum mechanics \cite{Seleson:2009:UpscalingMDtoPD}, and
development of a rigorous multiscale method for the analysis of
fiber-reinforced composites capable of resolving dynamics at
structural length scales as well as the length scales of the
reinforcing fibers~\cite{Alali:2009:HeterogeneousPD}. Progress towards
a nonlocal calculus is reported in
\cite{Gunzburger:2009:NonlocalCalculus}. Development and analysis of a
nonlocal diffusion equation is reported in
\cite{Andreu:2008:NeumannPrb,Andreu:2008:NeumannPrb2,Andreu:2009:pLaplace}.
Theoretical developments for general class of integro-differential
equation related to the fractional Laplacian are presented in
\cite{caffarelliSilvestre2007,caffarelliSilvestre2009,silvestre2006}. Mathematical and numerical analysis for linear nonlocal peridynamic boundary problems appears in \cite{Du:2010:MathAnalPD,Zhou:2010:MathAnalPD}.
We discuss in \S\ref{sec:Interpretations} some specific contexts where
the nonlocal operator $\mathcal{L}$ appears, and the assumptions
placed upon $\mathcal{L}$ by those interpretations.

To the best of authors' knowledge, this article represents the first
work on domain decomposition methods for nonlocal models. Our aim is
to generalize iterative substructuring methods to a nonlocal setting
and characterize the impact of nonlocality upon the scalability of
these methods. To begin our analysis, we first develop a weak form for
\eqref{eq:1DomainPDstrong} in \S\ref{sec:WeakForm}.  The main
theoretical construction for conditioning is in
\S\ref{sec:condNumberTheory}. We establish spectral equivalences to
bound the condition numbers of the stiffness and Schur complement
matrices.  For that, we prove a nonlocal Poincar\'{e} inequality for
the lower bound and a dimension dependent estimate for the upper
bound.  This leads to the novel result that \emph{the condition number
  of the discrete nonlocal operator can be bounded independently of the
  mesh size}.  In \S\ref{sec:DDPD}, we construct a suitable nonlocal
domain decomposition framework with special attention to transmission
conditions. Then, we prove the equivalence of the boundary value problems
corresponding to the single domain and the two-domain
decomposition. In \S\ref{sec:NonlocalSubstructure}, we first define a
\emph{discrete energy minimizing extension}, a nonlocal analog of
discrete harmonic extension in the local case, to study the
conditioning of the Schur complement in the nonlocal setting. We
discretize our two-domain weak form to arrive at a \emph{nonlocal
  Schur complement}. We perform numerical studies to validate our
theoretical results.  Finally in \S\ref{sec:conclusions}, we draw
conclusions about conditioning and suggest future research directions
for nonlocal domain decomposition methods.


\section{Interpretations of the Operator  
$\mathcal{L}$} \label{sec:Interpretations}

The operator $\mathcal{L}$ appears in many different application areas, from evolution equations for species population densities \cite{Carrillo:2005:PopulationModels} to image processing \cite{Gilboa:2008:NonlocalImage}. We review two specific contexts in which the operator $\mathcal{L}$ of \eqref{eq:1DomainPDstrong} is utilized, paying special attention to associated assumptions these interpretations place upon $\bdC$ in $\mathcal{L}$. In all cases, we find $\bdC$ to have local support about $\bdx$, meaning that we must prescribe Dirichlet boundary conditions only for
\begin{align} \label{eq:BoundaryOmega}
 \mathcal{B}\Omega := \textrm{supp}(\bdC) \backslash \Omega,
\end{align}
as depicted in Figure \ref{fig:1Domain}. Furthermore, throughout this
article we assume an integrable $\bdC$.

\begin{figure}
 \begin{center}
\psfrag{1}{$\Omega$} \psfrag{2}{$\mathcal{B} \Omega$}
 \scalebox{0.75}{\includegraphics{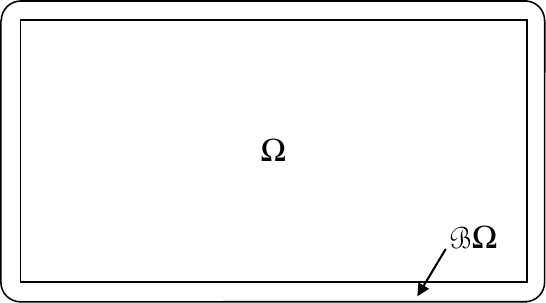}}
 \end{center}
 \caption{Typical domain for \eqref{eq:1DomainPDstrong}. $\bdu$ is prescribed in $\mathcal{B}\Omega$, and we solve for $\bdu$ in $\Omega$.}
 \label{fig:1Domain}
\end{figure}

\subsection{Nonlocal Diffusion Processes}
The equation
\begin{align} \label{eq:nonlocalDiffusion}
 u_t(\bdx,t) = \mathcal{L}(u(\bdx,t))
\end{align}
is an instance of a nonlocal p-Laplace equation for $p=2$, and has been used to model nonlocal diffusion processes, see \cite{Birch:2010:NonlocalDiffusion}, \cite{Andreu:2009:pLaplace} and the references cited therein. In this setting, $u(\bdx,t) \in \REAL$ is the density at the point $\bdx$ at time $t$ of some material, and we assume $C(\bdx,\bdxp) = C(\bdx-\bdxp)$ is translation invariant. Then, $\int_{\REAL^d} C(\bdxp-\bdx) u(\bdxp,t) d \bdxp$ is the rate at which material is arriving at $\bdx$ from all other points in $\textrm{supp}(C)$, and $-\int_{\REAL^d} C(\bdxp-\bdx) u(\bdx,t) d \bdxp$ is the rate at which material departs $\bdx$ for all other points in $\textrm{supp}(C)$~\cite{Fife:2003:NonclassicalParabolic,Andreu:2009:pLaplace}.

In this interpretation of \eqref{eq:1DomainPDstrong} the following restrictions are placed upon $C$ in $\mathcal{L}$. It is assumed that $C: \REAL^d \rightarrow \REAL$ is a nonnegative, radial, continuous function that is strictly positive in a ball of radius $\delta$ about $\bdx$ and zero elsewhere. Additionally, it is assumed that $\int_\Omega C(\bdxi) d \bdxi~<~\infty$.

\subsection{Nonlocal Solid Mechanics}
The equation
\begin{align} \label{eq:PDEOM}
 \bdu_{tt}(\bdx,t) = \mathcal{L}(\bdu(\bdx,t)) + \bdb(\bdx)
\end{align}
is the linearized peridynamic equation \cite[eqn. (56)]{Silling:2000:PD}. The corresponding time-independent (``peristatic'') equilibrium equation is \eqref{eq:1DomainPDstrong}. Peridynamics is a nonlocal reformulation of continuum mechanics that is oriented toward deformations with discontinuities, see \cite{Silling:2000:PD,Silling:2005:Meshfree,Silling:2007:PDStates} and the references therein. In this context, $\bdu \in \REAL^n$ is the displacement field for the body $\Omega$, and $\bdC(\bdx,\bdxp)$ is a stiffness tensor, also known as a \emph{micromodulus} tensor.

In this interpretation of \eqref{eq:1DomainPDstrong} the following
restrictions are placed upon $\bdC$ in $\mathcal{L}$. It is assumed
that $\bdC$ is integrable and strictly positive definite in the
\emph{neighborhood} of $\bdx$, $\horizon$, defined as
\begin{equation} \label{defn:horizon}
\horizon := \{ \bdxp \in \REAL^d \; : \; \norm{\bdxp -  \bdx} \leq \delta \},
\end{equation}
where $\delta > 0$ is called the \emph{horizon}. These assumptions are made because they are sufficient to ensure material
stability~\cite[pp. 191-194]{Silling:2000:PD}. It is also assumed that $\bdC =
\bdzero$ for $\norm{\bdxp - \bdx} > \delta$. If the material is
elastic, it follows that $\bdC(\bdx,\bdxp)$ is symmetric (e.g.,
$\bdC(\bdx,\bdxp)^T = \bdC(\bdx,\bdxp)$). Further, it is assumed that
$\bdC$ is symmetric with respect to its arguments (e.g.,
$\bdC(\bdx,\bdxp) = \bdC(\bdxp,\bdx)$). This follows from imposing
that the integrand of \eqref{eq:strongOp} must be anti-symmetric in its
arguments, e.g.,
\begin{align*}
\bdC(\bdx,\bdxp) \, [\bdu(\bdxp) - \bdu(\bdx)] = - \bdC(\bdxp,\bdx) \, [\bdu(\bdx) - \bdu(\bdxp)]
\end{align*}
in accordance with Newton's third law.

\section{A Nonlocal Variational Formulation} \label{sec:WeakForm}

Here we present a variational formulation of the nonlocal equation
\eqref{eq:1DomainPDstrong}. For peridynamics, this was presented by
Emmerich and Weckner in \cite{Emmrich:2007:DiscretizePD}. An analogous
expression also appears in \cite[eqn. (75)]{Silling:2000:PD}, as well
as \cite{Gunzburger:2009:NonlocalCalculus}.

Our construction takes place on the domain under consideration and
its nonlocal boundary, i.e., $\Omega \cup \mathcal{B} \Omega$.
We define the \emph{nonlocal closure} of $\Omega$ as follows:
\begin{equation*}
\OmegaNLC:= \Omega \cup \mathcal{B} \Omega.
\end{equation*}
We will utilize the function space
\begin{equation} \label{eq:pdFuncSpace}
 V := L_{2,0}^n(\OmegaNLC) = \left\{ \bdv \in L_2^n(\OmegaNLC):
 \bdv\vert_{\mathcal{B} \Omega} = \bdzero \right\},
\end{equation}
and the inner product
\begin{equation*}
(\bdu,\bdv) := \int_{\OmegaNLC} \bdu \, \bdv~d\bdx.
\end{equation*}
The weak formulation of \eqref{eq:1DomainPDstrong} is the following:
Given $\bdb(\bdx) \in L_{2}^n(\Omega)$, find $\bdu(\bdx) \in V$ such that
\begin{alignat}{2} \label{eq:pdEquMotionWeak}
a(\bdu,\bdv) = (\bdb,\bdv) \qquad \forall \bdv \in V,
\end{alignat}
where
\begin{equation} \label{eq:bilinearFormA}
a(\bdu,\bdv) := - \int_{\OmegaNLC}
\left\{ \int_{\OmegaNLC}
\bdC(\bdx,\bdxp) \, [\bdu(\bdxp) -
   \bdu(\bdx)]~d\bdxp \right\} \, \bdv(\bdx)~d\bdx.
\end{equation}

We assume that the iterated integral in \eqref{eq:bilinearFormA} is finite:
\begin{equation*}
 -\int_{\OmegaNLC} \left\{ \int_{\OmegaNLC} \bdC(\bdx,\bdxp) \,
[\bdu(\bdxp) - \bdu(\bdx)]~d\bdxp \right\} \, \bdv(\bdx)~d\bdx < \infty,
\end{equation*}
and that $\bdC(\bdx,\bdxp) \, [\bdu(\bdxp) - \bdu(\bdx)]$ is anti-symmetric in its arguments. Combining these observations with Fubini's Theorem gives the identity
\begin{align} \label{eq:equivBilinearForms}
& - \int_{\OmegaNLC}
\left\{ \int_{\OmegaNLC}
\bdC(\bdx,\bdxp) \, [\bdu(\bdxp) - \bdu(\bdx)]~d\bdxp \right\}
\, \bdv(\bdx)~d\bdx = \\
\nonumber &\frac{1}{2} \int_{\OmegaNLC}
\left\{ \int_{\OmegaNLC} \bdC(\bdx,\bdxp) \,
[\bdu(\bdxp) - \bdu(\bdx)] [\bdv(\bdxp) - \bdv(\bdx)]~d\bdx^\prime \right\}
\,~d\bdx.
\end{align}


For the proof of well-posedness of the nonlocal BVP
\eqref{eq:pdEquMotionWeak}, we utilize the equivalent expression
in \eqref{eq:equivBilinearForms} which induces the following bilinear
form:
\begin{equation} \label{eq:bilinearFormCFormat}
a(\bdu,\bdv) = \frac{1}{2} \int_{\OmegaNLC}
\int_{\OmegaNLC}
\bdC(\bdx,\bdxp) \, [\bdu(\bdxp) - \bdu(\bdx)] \,
[\bdv(\bdxp) - \bdv(\bdx)] ~d\bdxp~d\bdx,
\end{equation}
In \S\ref{sec:NonlocalPoincare}, we will establish the coercivity of
$a(u,u)$ in $V$ in the case of scalar functions, i.e., by setting
$n=1$ in \eqref{eq:pdFuncSpace}.  The continuity of $a(u,v)$ in
$L_{2}(\OmegaNLC)$ follows from \eqref{bilinearFormBdd}.  Furthermore,
$R(v) := (b,v)$ is a bounded linear functional on
$L_2(\OmegaNLC)$. Therefore, well-posedness of
\eqref{eq:pdEquMotionWeak} follows from the Lax-Milgram Lemma; also
see \cite[Sec. 6]{Gunzburger:2009:NonlocalCalculus}.

In 1D with $\OmegaNLC:= [-\delta, 1+\delta]$,
the weak form \eqref{eq:bilinearFormCFormat} becomes
\begin{align} \label{changeOfVariable}
 a(u,u) = \frac{1}{2 }\int_{[-\delta, 1+\delta]}
\int_{[x-\delta, x+\delta] \cap [-\delta, 1+\delta]} C(x,x^\prime)
\left( u(x^\prime) - u(x) \right)^2 dx^\prime dx,
\end{align}
where the limits of integration have been adjusted to account for the
support of $C(x,x^\prime)$, which is assumed to vanish if $\norm{x-x^\prime} > \delta$.  For this problem, the two-dimensional
domain of integration is the parallelogram shown in Figure
\ref{fig:IntegrationDomain}.
For 2D and 3D problems, the domains of
integration are four and six dimensional, respectively.
\begin{figure}
 \begin{center}
 \scalebox{0.6}{\includegraphics{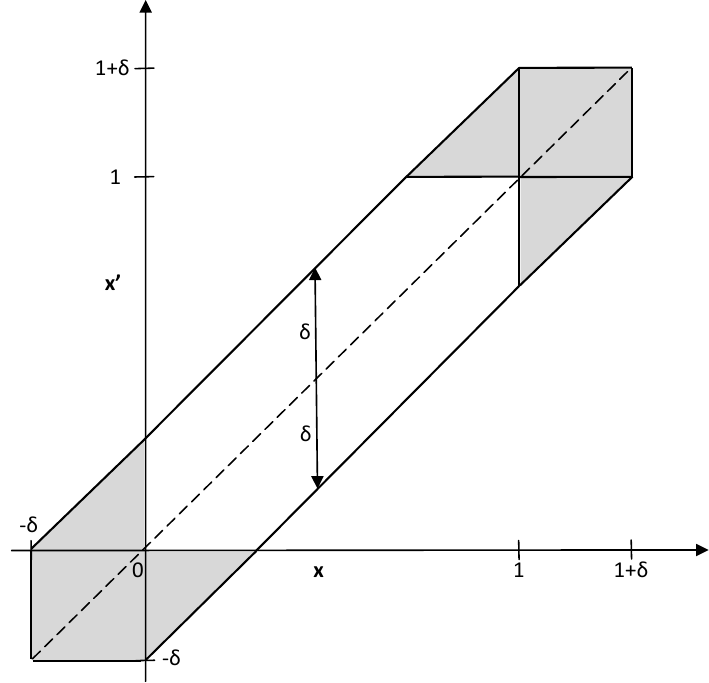}}
 \end{center}
 \caption{Domain of integration for a 1D problem where $\Omega = [0,1]$ and $\mathcal{B} \Omega = [-\delta,0] \cup [1,1+\delta]$. A nonlocal Dirichlet boundary condition is prescribed over $\mathcal{B} \Omega$. The grey region indicates the portion of the integration domain where either or both of $x$, $x^\prime$ lie outside $\Omega$.} \label{fig:IntegrationDomain}
\end{figure}

\section{Nonlocal Spectral Equivalence} \label{sec:condNumberTheory}

The principle result of this section is Theorem \ref{thm:spec_equiv}, a condition number bound for the stiffness matrix arising from a finite element discretization of \eqref{eq:bilinearFormA}. We investigate the conditioning because it determines both the accuracy of the computed numerical solution, as well as the computational effort required by an iterative linear solver to produce the numerical solution. Quantifying the condition number bound is a necessary first step towards developing scalable preconditioners and optimal solvers for nonlocal models.

In the local setting, the classical condition number estimates rely on
a Poincar\'{e} inequality and an inverse inequality for the lower and
upper bound, respectively.  Similarly to the local case, we develop a
nonlocal Poincar\'{e} inequality to be used in the lower bound.  We
prove a nonlocal Poincar\'{e} inequality which is used to establish
the coercivity of the underlying bilinear form. However, for condition
number analysis, one needs a more refined Poincar\'{e} inequality
which involves an explicit $\delta$-quantification.  Such refined
inequality requires substantially more involved analysis, which has
been accomplished by the first author in the companion article
\cite{Aksoylu:2010:NLBVP}.

The $\delta$-quantification is an essential feature in the nonlocal
setting because the lower bound turns out to be dimension dependent,
unlike in the local case.  This dimensional dependence is induced by
the neighborhood $\mathcal{H}_x$ (see \eqref{defn:horizon}), which is
$d$-dimensional in the nonlocal setting but zero-dimensional (a point)
in the local setting.  Dimension dependence in the Poincar\'{e}
inequality is captured by $\delta^m$ (see
\S\ref{sec:NonlocalPoincare}) where the power $m$ exhibits a
dimensional dependence (i.e., $m=m(d)$).

For the upper bound, we prove a direct estimate instead of an inverse
inequality.  Neither the upper bound estimate nor the Poincar\'{e}
inequality requires discrete spaces. Hence, our estimate is valid in infinite
dimensional function spaces, a stronger result than that for the local
setting.

We investigate the effect of the horizon size $\delta$ on the
conditioning of the underlying operators.  Therefore, we reduce the
analysis to the case $\bdC(\bdx,\bdxp) = \chi_\delta(\bdx - \bdxp)$
where $\chi_\delta(\bdx - \bdxp)$ denotes the \emph{canonical} kernel
function whose only role is the representation of the neighborhood in
\eqref{defn:horizon} by a characteristic function.
Namely,
\begin{align} \label{eq:pdKernel}
\chi_\delta(\bdx - \bdxp) := \left\{
\begin{array}{cl}
1, & \norm{\bdx-\bdx^\prime} \leq \delta \\
0, & \textrm{otherwise.}
\end{array}
\right.
\end{align}
Note that $\chi_\delta(\bdx - \bdxp)$ is a radial function, which describes isotropic materials~\cite{Silling:2007:PDStates}.  For the
remainder of this article, we will restrict our discussion to scalar
problems. Namely, we set $n=1$ in \eqref{eq:pdFuncSpace} which yields,
for instance, $\bdu(\bdx) = u(\bdx)$, $\bdC(\bdx,\bdxp) = C(\bdx,\bdxp)$, etc.
Therefore, the bilinear form under consideration becomes
\begin{equation} \label{eq:canonicalBilinearForm}
a(u,v) = \frac{1}{2} \int_{\OmegaNLC}
\int_{\OmegaNLC} \chi_\delta(\bdx - \bdxp)
[u(\bdxp) - u(\bdx)] [v(\bdxp) - v(\bdx)]~d\bdxp~d\bdx.
\end{equation}
We state an important property of the canonical kernel which
will be used in the upcoming proofs:
\begin{equation} \label{canonicalKernelIntegral}
\int_{\OmegaNLC}\chi_\delta(\bdx - \bdxp)~d\bdx^\prime
\leq w_d \delta^d \quad \bdx \in \OmegaNLC,
\end{equation}
where $w_d$ is the volume of the unit ball in $\REAL^d$.
Note that the equality in \eqref{canonicalKernelIntegral} is
attained when the neighborhood of $\bdx$, $\horizon$ in
\eqref{defn:horizon}, is entirely contained in $\OmegaNLC$, i.e.,
when $\bdx \in \Omega$.

\subsection{Nonlocal Poincar\'{e} inequality} \label{sec:NonlocalPoincare}

In order to establish the coercivity of $a(\cdot,\cdot)$, we prove a
nonlocal Poincar\'{e} inequality.
\begin{prop}
Let $\Omega \subset \REAL^d$ be a bounded domain and
$u \in L_{2,0}(\OmegaNLC)$.
Then, there exists
$\lambda_{Pncr} = \lambda_{Pncr}(\OmegaNLC,\delta) > 0$ such that
\begin{equation} \label{eq:PoincareIneqGeneral}
\lambda_{Pncr} \, \|u\|^2_{L_2(\OmegaNLC)} \leq a(u,u)
\end{equation}
\end{prop}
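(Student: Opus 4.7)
The strategy is to exploit the boundary condition $u|_{\mathcal{B}\Omega}=\bdzero$ by controlling $u$ on a finite sequence of layers that march inward from $\mathcal{B}\Omega$, using at each step a local Poincaré-type averaging identity to trade point values of $u$ against the nonlocal energy integrand.

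First I would reduce the left-hand side: since $u \in L_{2,0}(\OmegaNLC)$ we have $\|u\|^2_{L_2(\OmegaNLC)} = \|u\|^2_{L_2(\Omega)}$, so it suffices to bound $\|u\|^2_{L_2(\Omega)}$ by $a(u,u)$. Next I would introduce a nested family
\[
\Omega^{(k)} := \{\bdx \in \Omega : \mathrm{dist}(\bdx,\mathcal{B}\Omega) \geq k\delta/2\}, \qquad S_k := \Omega^{(k-1)} \setminus \Omega^{(k)},
\]
with $\Omega^{(0)}=\Omega$. Boundedness of $\Omega$ forces $\Omega^{(K)}=\emptyset$ for some integer $K$ depending only on $\mathrm{diam}(\OmegaNLC)/\delta$, so $\Omega = \bigcup_{k=1}^K S_k$ is a finite decomposition into layers of geodesic thickness $\delta/2$.

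For the base layer $k=1$, each $\bdx \in S_1$ has $\horizon \cap \mathcal{B}\Omega$ of Lebesgue measure at least some $\mu_1>0$, and $u$ vanishes there. Writing $u(\bdx) = u(\bdx)-u(\bdxp)$ for $\bdxp$ in this intersection, averaging, applying Jensen's inequality, and integrating in $\bdx$ yields $\|u\|^2_{L_2(S_1)} \leq (2/\mu_1)\,a(u,u)$. For the inductive step $k \geq 2$, the horizon $\horizon$ of each $\bdx \in S_k$ meets $\Omega^{(k-1)}$ in a set of measure at least $\mu_k>0$; splitting $u(\bdx) = [u(\bdx)-u(\bdxp)]+u(\bdxp)$, averaging over that intersection, and applying Jensen/Cauchy--Schwarz together with Fubini to the second piece gives
\[
\|u\|^2_{L_2(S_k)} \leq A_k\, a(u,u) + B_k\, \|u\|^2_{L_2(\Omega^{(k-1)})},
\]
with constants $A_k,B_k$ depending only on $d,\delta,\mu_k$. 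Iterating this recursion from $k=1$ up through $k=K$ and summing produces an inequality of the form $\|u\|^2_{L_2(\Omega)} \leq \lambda_{Pncr}^{-1}\, a(u,u)$, with $\lambda_{Pncr}=\lambda_{Pncr}(\OmegaNLC,\delta)>0$.

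\textbf{Main obstacle.} The delicate geometric step is securing the uniform lower bounds $\mu_k>0$ on the horizon--layer intersections. This is routine for domains with mild boundary regularity (an interior cone condition, or a Lipschitz boundary, is already more than enough), but for an arbitrary bounded $\Omega$ one should fall back on covering $\OmegaNLC$ by a finite maximal $\delta/4$-packing and replacing the layer iteration by a $\delta$-chain argument along overlapping balls, at the expense of a larger but still finite constant. No refined $\delta$-quantification is attempted here, as the proposition only asserts existence of $\lambda_{Pncr}$; the sharp dependence on $\delta$ is precisely what is deferred to the companion article \cite{Aksoylu:2010:NLBVP}.
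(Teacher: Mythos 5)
Your approach is essentially the paper's: both proofs decompose $\OmegaNLC$ into nested strips of thickness $\delta/2$ marching inward from the nonlocal boundary, use the splitting $u(\bdx)=[u(\bdx)-u(\bdxp)]+u(\bdxp)$ against a uniform positive lower bound on the horizon--strip intersection measure (your $\mu_k$ corresponds to the paper's $\alpha_j := \min_{\bdx\in\overline{S_j}}\int_{S_{j-1}}\chi_\delta(\bdx-\bdxp)\,d\bdxp$), derive a layer-by-layer recursion, and sum. The paper establishes $\alpha_j>0$ via continuity of $F(\bdx)=\int_{S_{j-1}}\chi_\delta(\bdx-\bdxp)\,d\bdxp$ on the compact $\overline{S_j}$ together with the geometric fact that $S_{j-1}\cap B(\bdx,\delta)\neq\emptyset$ for $\bdx\in S_j$, which is the same issue you flag as the delicate geometric step.

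One slip to correct: in your inductive step you write that $\horizon$ meets $\Omega^{(k-1)}$ with measure $\geq\mu_k$ and obtain the recursion $\|u\|^2_{L_2(S_k)}\leq A_k\,a(u,u)+B_k\,\|u\|^2_{L_2(\Omega^{(k-1)})}$. But with your nesting $\Omega^{(k-1)}\supset S_k$, so the right-hand side contains the left-hand side and the recursion does not close unless $B_k<1$, which you have no reason to expect. You need to average over the \emph{already controlled, outer} region instead: replace $\Omega^{(k-1)}$ by $S_{k-1}$ (as in the paper) or by $\Omega\setminus\Omega^{(k-1)}$. With that sign reversal the iteration closes exactly as in the paper, so this is a correctable index slip rather than a conceptual gap.
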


\begin{proof}
The proof is an extension of the one given in
\cite[Prop. 2.5]{Andreu:2009:pLaplace} for a similar bilinear form.
We construct a finite covering for $\OmegaNLC$ using
strips of width $\delta / 2$ as follows:
\begin{eqnarray}
S_{-1} & := & \{ \bdx \in \mathcal{B} \Omega :
 \frac{\delta}{2} \leq \mathrm{dist}(\bdx,\partial \Omega)
\leq \delta \},\\
S_0 & := & \{ \bdx \in \mathcal{B} \Omega \setminus S_{-1}: \mathrm{dist}(\bdx,S_{-1}) \leq \frac{\delta}{2} \},\\
S_1 & := & \{ \bdx \in \Omega: \mathrm{dist}(\bdx,\partial \Omega)
\leq \frac{\delta}{2} \},\\
S_j & := & \{ \bdx \in \Omega \setminus \bigcup_{k=1}^{j-1} S_k:
\mathrm{dist}(\bdx,S_{j-1}) \leq \frac{\delta}{2} \}, \quad j=1 \ldots, l,
\end{eqnarray}
where $\mathrm{dist}$ denotes the shortest distance in the usual
Euclidean sense.  The number of strips covering $\Omega$ is
$l = l(\OmegaNLC,\delta)$.

We trivially have the following for $j=0, \ldots, l$:
\begin{equation*}
\int_{\OmegaNLC} \int_{\OmegaNLC} \myChi \,
|u(\bdx^\prime) - u(\bdx)|^2~d\bdx^\prime d\bdx \geq
\int_{S_j} \int_{S_{j-1}} \myChi \,
|u(\bdx^\prime) - u(\bdx)|^2~d\bdx^\prime d\bdx.
\end{equation*}

Using
$
|u(\bdx)|^2 = |u(\bdxp) - \{ u(\bdxp) - u(\bdx) \}|^2 \leq
2 \{ |u(\bdxp) - u(\bdx)|^2 + |u(\bdxp)|^2 \},
$
a change in the order of integration, and the following result (obtained
from \eqref{canonicalKernelIntegral})
\begin{equation*}
\int_{S_{j}} \chi_\delta(\bdx - \bdxp)~d\bdx^\prime
\leq w_d \delta^d,
\end{equation*}
we obtain the following:
\begin{eqnarray*}
&& \int_{S_j} \int_{S_{j-1}} \chi_\delta(\bdx - \bdxp) \, |u(\bdx^\prime) - u(\bdx)|^2~d\bdx^\prime d\bdx \\
& \geq &   \frac{1}{2} \int_{S_j} \int_{S_{j-1}} \chi_\delta(\bdx - \bdxp) \, |u(\bdx)|^2~d\bdxp d\bdx
- \int_{S_j} \int_{S_{j-1}} \chi_\delta(\bdx - \bdxp) \, |u(\bdx^\prime)|^2~d\bdxp d\bdx\\
& = & \frac{1}{2} \int_{S_j} \left\{ \int_{S_{j-1}} \chi_\delta(\bdx - \bdxp)~d\bdxp \right\} \, |u(\bdx)|^2~d\bdx
- \int_{S_{j-1}} \left\{ \int_{S_j} \chi_\delta(\bdx - \bdxp) d\bdx \right\} \, |u(\bdxp)|^2~d\bdxp \\
& \geq & \frac{1}{2} \int_{S_j} \left\{ \int_{S_{j-1}} \chi_\delta(\bdx - \bdxp)~d\bdxp \right\} \, |u(\bdx)|^2~d\bdx
- w_d \, \delta^d \int_{S_{j-1}} |u(\bdx^\prime)|^2~d\bdxp \\
& \geq & \frac{1}{2} \, \min_{x \in \overline{S}_j} \int_{S_{j-1}} \chi_\delta(\bdx - \bdxp) ~d\bdxp \int_{S_j} |u(\bdx)|^2~d\bdx
- w_d \, \delta^d \, \int_{S_{j-1}} |u(\bdx^\prime)|^2~d\bdx^\prime.
\end{eqnarray*}
The function
\begin{equation*}
F(\bdx) := \int_{S_{j-1}} \chi_\delta(\bdx - \bdxp)~d\bdxp,
\quad \bdx \in \overline{S_j}
\end{equation*}
is continuous, which follows from continuity of the integral operator and the fact that $\chi_\delta(\bdx - \bdxp)$ is
integrable.
By construction of the covering, we have
\begin{equation*}
S_{j-1} \cap B(\bdx,\delta) \neq \emptyset, \quad \bdx \in S_j,
\end{equation*}
where $B(\bdx,\delta)$ is a ball of radius $\delta$ centered at $\bdx$. Hence, we obtain
\begin{equation*} 
F(\bdx) =
\text{measure}(\{ \bdx \in \overline{S_j}: S_{j-1} \cap B(\bdx,\delta)\})> 0, \quad \bdx \in \overline{S_j}.
\end{equation*}
Therefore by continuity,
$F(\bdx)$ attains its infimum in  $\overline{S_j}$ and we conclude that
\begin{equation*}
\alpha_j := \min_{\bdx \in \overline S_j} F(\bdx) > 0.
\end{equation*}
Consequently, we have the inequality:
\begin{equation} \label{ballStep1}
\frac{\alpha_j}{4} \int_{S_j} |u(\bdx)|^2~d\bdx \leq
a(u,u) + w_d \, \delta^d \int_{S_{j-1}} |u(\bdxp)|^2~d\bdxp.
\end{equation}
From the boundary condition, we get
\begin{equation*}
\int_{S_{-1}}|u(\bdx)|^2~d\bdx = \int_{S_{0}}|u(\bdx)|^2~d\bdx = 0.
\end{equation*}
Moreover due to the boundary condition and \eqref{ballStep1}, we get
\begin{equation} \label{pf:PoincareStep-1}
\frac{\alpha_1}{4} \int_{S_1} |u(\bdx)|^2~d\bdx \leq a(u,u)
\end{equation}
For the cases $j=2,3$, we respectively have:
\begin{eqnarray}
\label{pf:PoincareStep0}
\frac{\alpha_2}{4} \int_{S_2} |u(\bdx)|^2~d\bdx & \leq &
a(u,u) + w_d \, \delta^d \int_{S_1} |u(\bdxp)|^2~d\bdxp \\
\label{pf:PoincareStep1}
\frac{\alpha_3}{4} \int_{S_3} |u(\bdx)|^2~d\bdx & \leq &
a(u,u) + w_d \, \delta^d\int_{S_2} |u(\bdxp)|^2~d\bdxp.
\end{eqnarray}
To relate \eqref{pf:PoincareStep1} to right-hand side of \eqref{pf:PoincareStep0},
multiply \eqref{pf:PoincareStep0} by $(4 \, w_d \, \delta^d) / \alpha_2$:
\begin{equation} \label{pf:PoincareStep2_A}
\frac{\alpha_3}{4} \int_{S_3} |u(\bdx)|^2~d\bdx \leq
(1+ \frac{4 \, w_d \, \delta^d}{\alpha_2})~a(u,u) +
\frac{4 \, (w_d \, \delta^d)^2}{\alpha_2} \int_{S_1} |u(\bdxp)|^2~d\bdxp.
\end{equation}
Then using \eqref{pf:PoincareStep-1}, \eqref{pf:PoincareStep2_A} becomes:
\begin{equation*}
\frac{\alpha_3}{4}~ \int_{S_3} |u(\bdx)|^2~d\bdx \leq
\left(1+ \frac{4 \,  w_d \, \delta^d}{\alpha_2} +
\frac{ (4 \, w_d \, \delta^d)^2}{\alpha_1 \, \alpha_2} \right) ~ a(u,u).
\end{equation*}
Continuing this process, we see the existence of a constant
$c(\OmegaNLC, \delta)$ satisfying:
\begin{equation} \label{ballStep2}
\frac{\alpha_j}{4}~ \int_{S_j} |u(\bdx)|^2~d\bdx \leq c(\OmegaNLC, \delta) ~
a(u,u), \quad j=-1, \ldots, l.
\end{equation}
Adding \eqref{ballStep2} for $j=-1, \ldots, l$ and using the fact
that the covering of $\OmegaNLC$ is composed of disjoint strips, i.e.,
$\OmegaNLC = \cup_{k=-1}^l S_k,~~~ S_j \cap S_k = \emptyset,~j \neq k$,
we arrive at the coercivity result.
\end{proof}

\begin{rem}
  The coercivity proof in \cite[Prop. 2.5]{Andreu:2009:pLaplace}
  assumes a continuous kernel function. The coercivity proof we
  provide can be generalized to any nonnegative locally integrable
  radial kernel function which satisfies  $C(r) > 0$ on $(0, \delta)$ see
  the companion article \cite[Lemma 2.4]{Aksoylu:2010:NLBVP}.
\end{rem}

\begin{rem}
The above Poincar\'{e} type inequality can be established for general Dirichlet boundary
conditions, i.e., $u \in L_2(\OmegaNLC)$ with
$u |_{\mathcal{B} \Omega} \neq 0$. In this case,
the inequality statement reads as follows:
\begin{equation} \label{PoincareGeneralBC}
\lambda_{Pncr}(\OmegaNLC, \delta) \, \|u\|^2_{L_2(\OmegaNLC)}
\leq a(u,u) + \int_{S_{-1}} |u(\bdx)|^2~d\bdx,
\end{equation}
where $S_{-1}$ is the outermost strip of the covering of
$\OmegaNLC$. Deducing a coercivity estimate from \eqref{PoincareGeneralBC}
seems impossible unless a zero nonloncal boundary condition is assumed.
For mixed and Neumann type boundary conditions, see the companion paper
\cite[Remark 2.5]{Aksoylu:2010:NLBVP}.
\end{rem}

\begin{rem}
Coercivity of the bilinear form has also been established in
\cite{Gunzburger:2009:NonlocalCalculus}
under the condition (see \cite[Eq. (6.1)]{Gunzburger:2009:NonlocalCalculus})
that
\begin{equation} \label{GLcoercivityCond}
\int_{\mathcal{B} \Omega} C(\bdx,\bdxp)~d\bdxp \geq c > 0, \quad \bdx \in \Omega.
\end{equation}
This condition is stringent because it assumes that all interior points interact directly with the nonlocal boundary $\mathcal{B}\Omega$, a situation only possible if the horizon $\delta$ is on the order of $\vert \Omega \vert$. For applications of practical interest especially in peridynamics, horizon is set to be $\delta \ll \vert \Omega \vert$ because problems with large $\delta$ are computationally intractable. The coercivity proof given in this article does not assume \eqref{GLcoercivityCond}.
\end{rem}

For the condition number analysis, $\delta$-quantification is essential.
In the companion article \cite{Aksoylu:2010:NLBVP}, the first author
gives a more refined nonlocal Poincar\'{e} inequality. Namely,
for sufficiently small $\delta$:
\begin{equation} \label{poincareDelta}
\lambda_{refined}(\OmegaNLC) \, \delta^{d+2} \|u\|^2_{L_2(\OmegaNLC)}
\leq a(u,u).
\end{equation}
Note that $\lambda_{refined}$ does not depend on $\delta$. In order
to see why $\lambda_{Pncr}(\OmegaNLC,\delta)$ can be refined to a
constant $\lambda_{refined}(\OmegaNLC)$, we proceed with a 1D demonstration.

\subsection{Demonstration of explicit $\delta$-dependence in the nonlocal Poincar\'{e} inequality} \label{sec:choose_m}

We demonstrate the lower bound in \eqref{poincareDelta} by a 1D
example.  After enforcing a sufficient regularity assumption, we
resort to a Taylor series expansion. For that purpose, we assume that
$u \in C^4(\OmegaNLC)$ with homogenous Dirichlet
boundary conditions enforced on the nonlocal boundary $\mathcal{B}\Omega$.
This demonstration is based on the desire to have the nonlocal bilinear
form converge to its corresponding local (classical) bilinear form as
$\delta \rightarrow 0$. For discussions of convergence of other
nonlocal operators to their classical local counterparts, see
\cite{Weckner:2007:PDConverge,Silling:2008:SmallDelta}.

For the sake of clarity, we utilize the equivalent bilinear form below
given in \eqref{eq:equivBilinearForms} so that the effect of the boundary
condition can easily be seen. We accompany this with a change of variable as follows:

\begin{eqnarray*}
 a(u,u) & = & - \int_{\OmegaNLC}
\left\{ \int_{\OmegaNLC \cap [x-\delta, x+\delta]}
[u(x^\prime) - u(x)]~dx^\prime\right\}u(x)~dx \\
& = & - \int_{\Omega}
\left\{ \int_{[x-\delta, x+\delta]}
[u(x^\prime) - u(x)]~dx^\prime\right\}u(x)~dx \\
& = & - \int_{\Omega}
\left\{ \int_{-\delta}^{\delta}
[u(x+ \varepsilon) - u(x)]~d\varepsilon \right\}u(x)~dx.
\end{eqnarray*}

Using the Taylor expansion
\begin{eqnarray*}
u(x + \varepsilon) & = & u(x) + \frac{\varepsilon}{1!} \frac{du}{dx}(x) +
\frac{\varepsilon^2}{2!} \frac{d^2u}{dx^2}(x) +
\frac{\varepsilon^3}{3!} \frac{d^3u}{dx^3}(x) + \mathcal{O}(\varepsilon^4)
\label{eq:uTaylor},
\end{eqnarray*}
the integrand becomes:
\begin{equation*}
[u(\varepsilon+x) - u(x)] u(x) =  \varepsilon \frac{du}{dx}(x) u(x) +
\frac{\varepsilon^2}{2!} \frac{d^2u}{dx^2}(x) u(x)+
\frac{\varepsilon^3}{3!} \frac{d^3u}{dx^3}(x) u(x) + \mathcal{O}(\varepsilon^4).
\end{equation*}
Hence, we arrive at the following expression using $u|_{\partial \Omega} = 0$
(due to $u|_{\mathcal{B} \Omega} = 0$):
\begin{eqnarray*}
 a(u,u) & = & - \int_{\Omega}
\left\{ \frac{\delta^3}{3} \frac{d^2u}{dx^2}(x) u(x) +
\mathcal{O}(\delta^5) \right\}~dx \\
& = & \frac{\delta^3}{3} \int_{\Omega} \frac{du}{dx}(x) \frac{du}{dx}(x)~dx +
\mathcal{O}(\delta^5).
\end{eqnarray*}
Now, denoting the local bilinear form by
\begin{equation*}
\ell(u,u) := |u|_{H^1(\Omega)}^2,
\end{equation*}
we connect the nonlocal and local bilinear forms:
\begin{eqnarray*}
a(u,u) & = & \frac{\delta^3}{3}~ \ell(u,u) + \mathcal{O}(\delta^5).
\end{eqnarray*}
Therefore, the scaled nonlocal bilinear form asymptotically converges
to the local bilinear form:
\begin{equation} \label{eq:nonlocalConvergence}
3~\delta^{-3}~a(u,u) = \ell(u,u) + \mathcal{O}(\delta^2).
\end{equation}
Using the nonlocal Poincar\'{e} inequality \eqref{eq:PoincareIneqGeneral}
and \eqref{eq:nonlocalConvergence}, we have
\begin{equation*}
\lim_{\delta \rightarrow 0 }~3~\lambda_{Pncr}(\OmegaNLC, \delta) ~\delta^{-3}~
\norm{u}_{L_2(\OmegaNLC)}^2
\leq \ell(u,u).
\end{equation*}
Therefore, for the left hand side to remain finite, we have to enforce
that $\lambda_{Pncr}(\OmegaNLC, \delta) = c(\OmegaNLC) \, \delta^m$ with
$m \geq 3$.  We
desire the largest possible lower bound in the nonlocal
Poincar\'{e} inequality. This implies that $m = 3$, which is in
agreement with \eqref{poincareDelta} and is observed numerically in 1D; see the experiments in \S\ref{sec:auu1D}.

\subsection{An upper bound for $a(u,u)$}
We prove the following dimension dependent estimate:

\begin{lem} \label{lem:upperBound}
Let $\OmegaNLC \subset \REAL^d$ be bounded and $u \in L_{2}(\OmegaNLC)$.
Then, there exists
$\overline{\lambda} > 0$ independent of $\OmegaNLC$ and $\delta$ such that
\begin{equation} \label{eq:upperBound}
a(u,u) \leq \overline{\lambda} \; \delta^d \; \norm{u}_{L_2(\OmegaNLC)}^2.
\end{equation}
\end{lem}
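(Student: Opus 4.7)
The plan is a direct computation starting from the symmetric form \eqref{eq:canonicalBilinearForm}. First I would apply the elementary inequality $(a-b)^2 \le 2(a^2+b^2)$ to the squared difference $[u(\bdxp)-u(\bdx)]^2$ inside the integrand. This replaces the cross term with a sum of two pure-square terms weighted by the same nonnegative kernel $\chi_\delta(\bdx-\bdxp)$, producing
\begin{equation*}
a(u,u) \le \int_{\OmegaNLC}\int_{\OmegaNLC} \chi_\delta(\bdx-\bdxp)\,\bigl[u(\bdxp)^2 + u(\bdx)^2\bigr]\, d\bdxp\, d\bdx.
\end{equation*}

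Next I would exploit the radial symmetry $\chi_\delta(\bdx-\bdxp)=\chi_\delta(\bdxp-\bdx)$ together with Fubini's theorem to see that the two terms above contribute equally, so it suffices to bound
\begin{equation*}
a(u,u) \le 2\int_{\OmegaNLC} u(\bdx)^2 \left\{ \int_{\OmegaNLC} \chi_\delta(\bdx-\bdxp)\, d\bdxp \right\} d\bdx.
\end{equation*}
The inner integral is exactly what is controlled by \eqref{canonicalKernelIntegral}: it is bounded by $w_d\,\delta^d$ uniformly in $\bdx\in\OmegaNLC$. Substituting this pointwise bound and pulling the constant outside yields
\begin{equation*}
a(u,u) \le 2\, w_d\, \delta^d \, \|u\|_{L_2(\OmegaNLC)}^2,
\end{equation*}
which is the claim with $\overline{\lambda}=2w_d$, a constant depending only on the dimension $d$ (through the volume of the unit ball) and not on $\OmegaNLC$ or $\delta$.

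There is essentially no obstacle here; the only subtlety is cosmetic, namely keeping the factor $\tfrac{1}{2}$ from \eqref{eq:canonicalBilinearForm} and the factor $2$ from the inequality $(a-b)^2\le 2(a^2+b^2)$ properly tracked so that the final constant is explicit. It is worth emphasizing in the writeup that, in contrast to the standard local case, this upper bound is obtained without any inverse inequality and with no reference to a finite element space, so it holds on the full infinite-dimensional space $L_2(\OmegaNLC)$. The $\delta^d$ scaling arises naturally from \eqref{canonicalKernelIntegral} and reflects the $d$-dimensional volume of the neighborhood $\horizon$, matching the dimensional dependence observed on the lower-bound side via the nonlocal Poincar\'e inequality.
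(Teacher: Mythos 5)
Your proof is correct and follows essentially the same route as the paper: the elementary inequality $(a-b)^2\le 2(a^2+b^2)$, Fubini and evenness of $\chi_\delta$ to merge the two pure-square terms, and the pointwise bound \eqref{canonicalKernelIntegral} on the inner integral, yielding $\overline{\lambda}=2w_d$. The bookkeeping of the $\tfrac12$ and the factor $2$ is handled correctly, and the final constant matches the paper's.
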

\begin{proof}
Using $(u(\bdxp) - u(\bdx))^2 \leq 2 (u(\bdxp)^2 + u(\bdx)^2)$, we get
\begin{equation} \label{bilinearFormExplicit}
a(u,u) \leq
\int_{\OmegaNLC} \int_{\OmegaNLC} \myChi
( u^2(\bdx) + u^2(\bdxp) )~d\bdxp \, d\bdx.
\end{equation}
Furthermore, by a change in the order of integration and the fact that
$\myChi$ is an even function, one gets
\begin{equation} \label{firstTermEquivalence}
\int_{\OmegaNLC} \int_{\OmegaNLC} \myChi u^2(\bdx)~ d\bdxp \,d\bdx
= \int_{\OmegaNLC} \int_{\OmegaNLC} \myChi u^2(\bdxp)~ d\bdxp\,d\bdx.
\end{equation}
Then using \eqref{firstTermEquivalence}, \eqref{bilinearFormExplicit} becomes:
\begin{equation*} 
a(u,u) \leq 2 \int_{\OmegaNLC} \int_{\OmegaNLC} \myChi \, u^2(\bdx)~d\bdxp \, d\bdx .
\end{equation*}
Using \eqref{canonicalKernelIntegral}, we immediately have the upper
bound:
\begin{eqnarray*}
2 \int_{\OmegaNLC} \int_{\OmegaNLC} \myChi \, u^2(\bdx)~d\bdxp \, d\bdx
& \leq & 2 w_d \, \delta^d ~\|u\|_{L_2(\OmegaNLC)}^2 \\
& =: & \overline{\lambda} \, \delta^d ~\|u\|_{L_2(\OmegaNLC)}^2.
\end{eqnarray*}
\end{proof}

\begin{rem}
The upper bound is \emph{sharp}; see the companion article
\cite[Sect. 4]{Aksoylu:2010:NLBVP}.  We also numerically demonstrate the
numerical sharpness of the upper bound; see \S\ref{sec:auu_num_expts}.
\end{rem}

\begin{rem}
A proof similar to that of Lemma \eqref{lem:upperBound} can be given
to show the boundedness of the bilinear form with an explicit constant. Namely,
\begin{equation} \label{bilinearFormBdd}
|a(u,v)| \leq 2 \, w_d\, \delta^d \,
\|u\|_{L_2(\OmegaNLC)} \|v\|_{L_2(\OmegaNLC)}.
\end{equation}
Also see the companion article \cite{Aksoylu:2010:NLBVP} for the boundedness
of the bilinear form for general kernel functions.
\end{rem}

\subsection{The Conditioning of the Stiffness Matrix 
$K$} \label{sec:cond_auu}

Combining the refined nonlocal Poincar\'{e} inequality \eqref{poincareDelta}
and the upper bound \eqref{eq:upperBound}, we arrive at a condition number estimate.
\begin{theorem} \label{thm:spec_equiv}
For sufficiently small $\delta$, the following spectral equivalence holds:
\begin{equation} \label{spectralEquivK}
\lambda_{refined}(\OmegaNLC) ~ \delta^{d+2} \leq
\frac{a(u,u)}{\norm{u}_{L_2(\OmegaNLC)}^2}
\leq \overline{\lambda}~\delta^d, \quad u \in L_{2, 0}(\OmegaNLC).
\end{equation}
Let $K$ be the stiffness matrix produced by discretizing $a(u,u)$. Then, the condition number of $K$ has the bound:
\begin{equation} \label{condNumberA}
\kappa(K) \lesssim \delta^{-2}.
\end{equation}
\end{theorem}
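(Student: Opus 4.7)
The spectral equivalence \eqref{spectralEquivK} is essentially a direct assembly of results already in hand. The plan is to divide through by $\|u\|_{L_2(\OmegaNLC)}^2$ in both the refined Poincar\'e inequality \eqref{poincareDelta} and the upper bound \eqref{eq:upperBound} of Lemma \ref{lem:upperBound}. Since both inequalities are stated for $u \in L_{2,0}(\OmegaNLC)$ (the homogeneous boundary condition is built into the Poincar\'e step, and the upper bound does not care about this restriction), concatenating them yields
\begin{equation*}
\lambda_{refined}(\OmegaNLC) \, \delta^{d+2} \leq \frac{a(u,u)}{\|u\|_{L_2(\OmegaNLC)}^2} \leq \overline{\lambda} \, \delta^d,
\end{equation*}
valid for sufficiently small $\delta$ (the smallness is inherited from \eqref{poincareDelta}).

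To pass from the continuous spectral equivalence to the condition number of $K$, I would introduce a conforming finite element subspace $V_h \subset V = L_{2,0}(\OmegaNLC)$ with basis $\{\phi_i\}$, write $u_h = \sum_i u_i \phi_i$ with coefficient vector $\mathbf{u}$, and observe that $\mathbf{u}^T K \mathbf{u} = a(u_h,u_h)$ by construction. The continuous spectral equivalence applied pointwise on $V_h$ gives
\begin{equation*}
\lambda_{refined}(\OmegaNLC) \, \delta^{d+2} \, \|u_h\|_{L_2(\OmegaNLC)}^2 \leq \mathbf{u}^T K \mathbf{u} \leq \overline{\lambda} \, \delta^d \, \|u_h\|_{L_2(\OmegaNLC)}^2.
\end{equation*}
For a quasi-uniform mesh of size $h$, the standard mass matrix equivalence $c_1 h^d \, \mathbf{u}^T \mathbf{u} \leq \|u_h\|_{L_2(\OmegaNLC)}^2 \leq c_2 h^d \, \mathbf{u}^T \mathbf{u}$ then translates the estimate into Rayleigh quotient bounds for $K$ itself. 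Taking the ratio of the upper and lower eigenvalue bounds, the $h^d$ factors cancel, leaving
\begin{equation*}
\kappa(K) \leq \frac{c_2 \, \overline{\lambda}}{c_1 \, \lambda_{refined}(\OmegaNLC)} \, \delta^{-2},
\end{equation*}
which is the claimed mesh-independent bound $\kappa(K) \lesssim \delta^{-2}$.

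The main obstacle is not any single calculation, since both one-sided bounds are already proven; rather, it is ensuring that nothing hidden in the discretization destroys mesh independence. The critical ingredient is that the upper bound \eqref{eq:upperBound} is a \emph{direct} $L_2$ estimate, not an inverse inequality, so it carries no $h^{-\alpha}$ factor. Combined with the fact that the continuous Poincar\'e inequality holds on all of $V$ (hence on $V_h$ with the same constant), the only mesh-dependent quantity entering the argument is the mass matrix scaling, which enters symmetrically in the max and min Rayleigh quotients and cancels. It is precisely the absence of an inverse inequality, a nonstandard feature of this nonlocal setting, that produces the mesh-size independent conditioning.
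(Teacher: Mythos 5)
Your proposal is correct and matches the paper's approach. The paper simply states that the spectral equivalence follows by combining the refined Poincar\'e inequality \eqref{poincareDelta} with the upper bound \eqref{eq:upperBound}, and that this yields an $h$-independent condition number bound; you have filled in the standard mass-matrix equivalence argument for passing from the continuous Rayleigh quotient to the discrete one, which the paper leaves implicit. Your observation that the absence of an inverse inequality is the source of mesh-size independence is exactly the point the paper makes in the discussion preceding \S\ref{sec:NonlocalPoincare}.
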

The spectral equivalence \eqref{spectralEquivK} enables us to construct an $h$ \emph{independent} upper bound for the condition number.

Note that the condition number of the stiffness matrix also depends upon the mesh size $h$. As an illustration, consider that the nonlocal bilinear form $a(u,u)$ must converge to the corresponding local bilinear form in the limit $\delta \rightarrow 0$, as demonstrated in \S\ref{sec:choose_m}, and that the condition number of the associated local stiffness matrix varies with $h^{-2}$. Thus, the bound in \eqref{condNumberA} is not tight in this limit, but allows us to investigate the highly nonlocal regime $h \ll \delta$, which is our principle interest. For an alternative approach to quantifying mesh-dependence in the conditioning of nonlocal models, see \cite{Zhou:2010:MathAnalPD}.

\begin{table}[ht!]
\centering
\subtable[Fixed $\delta$, vary $h$.]{
\scriptsize{
\begin{tabular}{|r|c|c|c|c|c|c|c|}
  \hline
                              &                                  & \multicolumn{3}{|c|}{Piecewise Constant Shape Functions} & \multicolumn{3}{|c|}{Piecewise Linear Shape Functions} \\
  \multicolumn{1}{|c|}{$1/h$} & \multicolumn{1}{|c|}{$1/\delta$} & \multicolumn{1}{|c|}{$\lambda_{\min}$} & \multicolumn{1}{|c|}{$\lambda_{\max}$} & \multicolumn{1}{|c|}{Condition \#} & \multicolumn{1}{|c|}{$\lambda_{\min}$} & \multicolumn{1}{|c|}{$\lambda_{\max}$} & \multicolumn{1}{|c|}{Condition \#} \\
  \hline
     2000 & 20 & 1.94E-07 & 6.07E-05 & 3.13E+02 & 1.94E-07 & 6.07E-05 & 3.13E+02 \\
     4000 & 20 & 9.69E-08 & 3.04E-05 & 3.13E+02 & 9.69E-08 & 3.04E-05 & 3.14E+02 \\
     8000 & 20 & 4.84E-08 & 1.52E-05 & 3.14E+02 & 4.84E-08 & 1.52E-05 & 3.14E+02 \\
  \hline
\end{tabular}
}
\label{table:auu:h}
}
\subtable[Fixed $h$, vary $\delta$.]{
\scriptsize{
\begin{tabular}{|r|c|c|c|c|c|c|c|}
  \hline
                              &                                  & \multicolumn{3}{|c|}{Piecewise Constant Shape Functions} & \multicolumn{3}{|c|}{Piecewise Linear Shape Functions} \\
  \multicolumn{1}{|c|}{$1/h$} & \multicolumn{1}{|c|}{$1/\delta$} & \multicolumn{1}{|c|}{$\lambda_{\min}$} & \multicolumn{1}{|c|}{$\lambda_{\max}$} & \multicolumn{1}{|c|}{Condition \#} &
  \multicolumn{1}{|c|}{$\lambda_{\min}$} & \multicolumn{1}{|c|}{$\lambda_{\max}$} & \multicolumn{1}{|c|}{Condition \#} \\
  \hline
    8000 & 20 & 4.84E-08 & 1.52E-05 & 3.15E+02 & 4.84E-08 & 1.52E-05 & 3.14E+02 \\
    8000 & 40 & 6.24E-09 & 7.61E-06 & 1.22E+03 & 6.24E-09 & 7.60E-06 & 1.22E+03 \\
    8000 & 80 & 7.92E-10 & 3.80E-06 & 4.80E+03 & 7.91E-10 & 3.80E-06 & 4.80E+03 \\
  \hline
\end{tabular}
}
\label{table:auu:delta}
}
\caption{Condition number for $K$ in 1D for (a) fixed $\delta$, allowing $h$ to vary, and (b) fixed $h$, allowing $\delta$ to vary, for both piecewise constant and linear shape functions. We see that the conditioning is apparently not strongly influenced by the choice of shape function. This data is plotted in Figures \ref{fig:auu}.} \label{table:auu}
\end{table}

\begin{figure}[ht!]
\centering
\subfigure[Fixed $\delta$, vary $h$.]{
 \scalebox{0.42}{\includegraphics{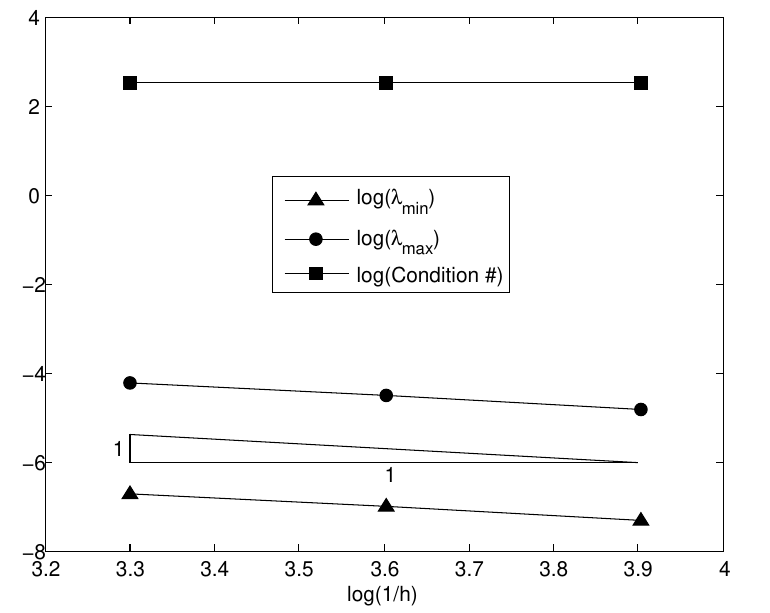}}
 \label{fig:auu:h}
}
\subfigure[Fixed $h$, vary $\delta$.]{
 \scalebox{0.42}{\includegraphics{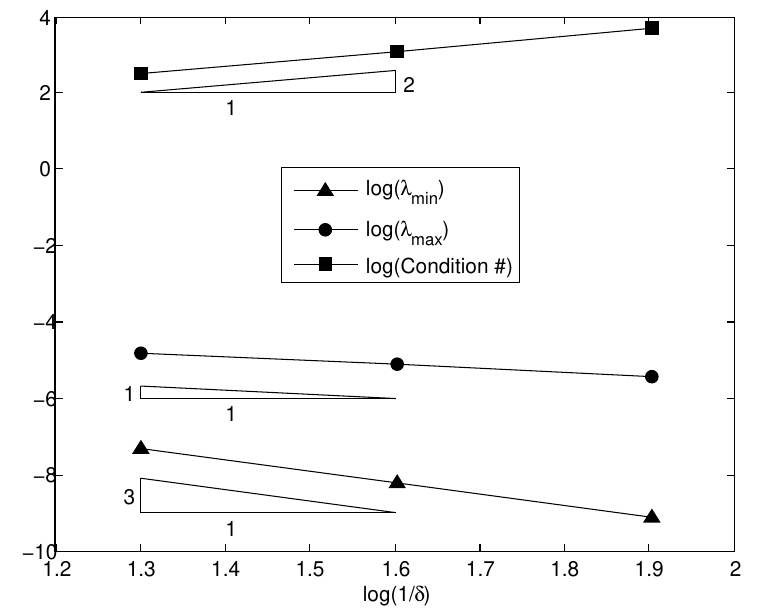}}
 \label{fig:auu:delta}
}
\caption{Condition number for $K$ in 1D for (a) fixed $\delta$, allowing $h$ to vary, and (b) fixed $h$, allowing $\delta$ to vary. The condition number is only weakly $h$-dependent, but varies with $\delta^{-2}$. These figures are plotted from data in Table \ref{table:auu}. The plots for piecewise linear and piecewise constant shape functions are identical.}
\label{fig:auu}
\end{figure}

\subsection{Numerical Verification of Condition Number by a Finite Element Formulation}
\label{sec:auu_num_expts}

For all computational results in this article, we let $\Omega = [0,1]^d$ be the unit $d$-cube, where $d$ is the spatial dimension, with $\OmegaNLC= [-\delta,1+\delta]^d$ the nonlocal closure. We impose the Dirichlet boundary condition $u=0$ on $\mathcal{B}\Omega = \OmegaNLC \backslash \Omega$. We use a conforming triangulation $\mathcal{T}_h$ where each element $E$ of $\mathcal{T}_h$ is a $d$-cube with a side length $h>0$. Consequently, each element in 1D, 2D, and 3D is a line segment of length $h$, a square of area $h^2$, and a cube of volume $h^3$, respectively.  Let $V_h \subset V$ be a finite dimensional subspace of $V$ from \eqref{eq:pdFuncSpace}. We use
a Galerkin finite element formulation of \eqref{eq:pdEquMotionWeak}:
\begin{align} \label{eq:pdEquMotionWeakDiscrete}
 a(u_h,v_h) = (b,v_h) \qquad \forall v_h \in V_h,
\end{align}
with Dirichlet boundary condition $u_h = 0$ on $\mathcal{B}\Omega$, where $V_h$ is the space of piecewise constant or piecewise linear shape functions on the mesh $\mathcal{T}_h$, and where we employ the canonical kernel function $\chi_\delta$ from \eqref{eq:pdKernel}. We denote by $K$ the stiffness matrix arising from the left-hand side of \eqref{eq:pdEquMotionWeakDiscrete}. To verify our theoretical results we numerically determine the ratio of the largest and smallest eigenvalues of $K$, defining the condition number of the problem.

\begin{table}[t]
\centering
\subtable[Fixed $\delta$, vary $h$.]{
\scriptsize{
\begin{tabular}{|r|c|c|c|c|}
  \hline
  \multicolumn{1}{|c|}{$1/h$} & \multicolumn{1}{|c|}{$1 / \delta$} & \multicolumn{1}{|c|}{$\lambda_{\min}$} & \multicolumn{1}{|c|}{$\lambda_{\max}$} & \multicolumn{1}{|c|}{Condition \#} \\
  \hline
     50  & 10 & 2.95E-07 & 1.40E-05 & 4.77E+01 \\
    100  & 10 & 7.11E-08 & 3.54E-06 & 4.97E+01 \\
    200  & 10 & 1.75E-08 & 8.86E-07 & 5.05E+01 \\
  \hline
\end{tabular}
}
\label{table2d:auu:h}
}
\subtable[Fixed $h$, vary $\delta$.]{
\scriptsize{
\begin{tabular}{|r|c|c|c|c|}
  \hline
  \multicolumn{1}{|c|}{$1/h$} & \multicolumn{1}{|c|}{$1 / \delta$} & \multicolumn{1}{|c|}{$\lambda_{\min}$} & \multicolumn{1}{|c|}{$\lambda_{\max}$} & \multicolumn{1}{|c|}{Condition \#} \\
  \hline
    200  & 10 & 1.75E-08 & 8.86E-07 & 5.05E+01 \\
    200 &  20 & 1.17E-09 & 2.22E-07 & 1.90E+02 \\
    200 &  40 & 7.63E-11 & 5.50E-08 & 7.21E+02 \\
  \hline
\end{tabular}
}
\label{table:auu2d:delta}
}
\caption{Condition number for $K$ in 2D using piecewise constant shape functions for (a) fixed $\delta$, allowing $h$ to vary, and (b) fixed $h$, allowing $\delta$ to vary. This data is plotted in Figure \ref{fig:auu2d}.} \label{table:auu2d}
\end{table}

\begin{figure}[t]
\centering
\subfigure[Fixed $\delta$, vary $h$. ]{
 \scalebox{0.42}{\includegraphics{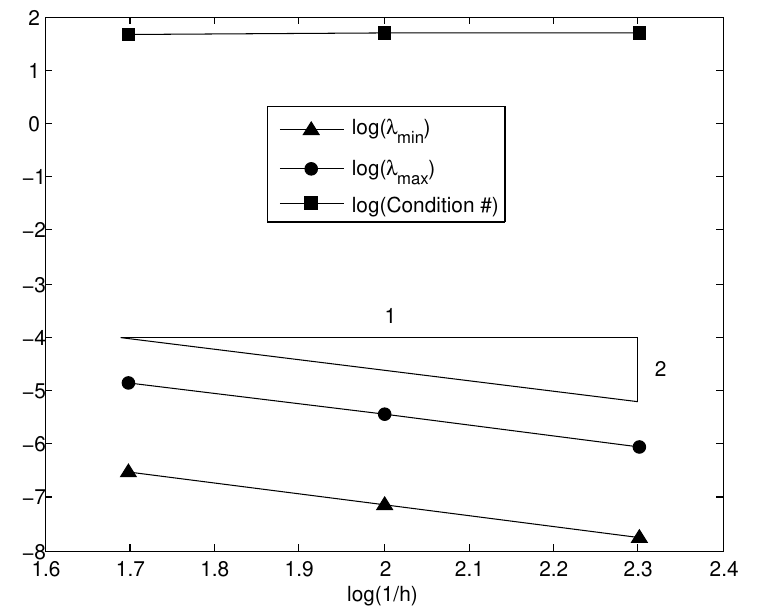}}
 \label{fig:auu2d:h}
}
\subfigure[Fixed $h$, vary $\delta$.]{
 \scalebox{0.42}{\includegraphics{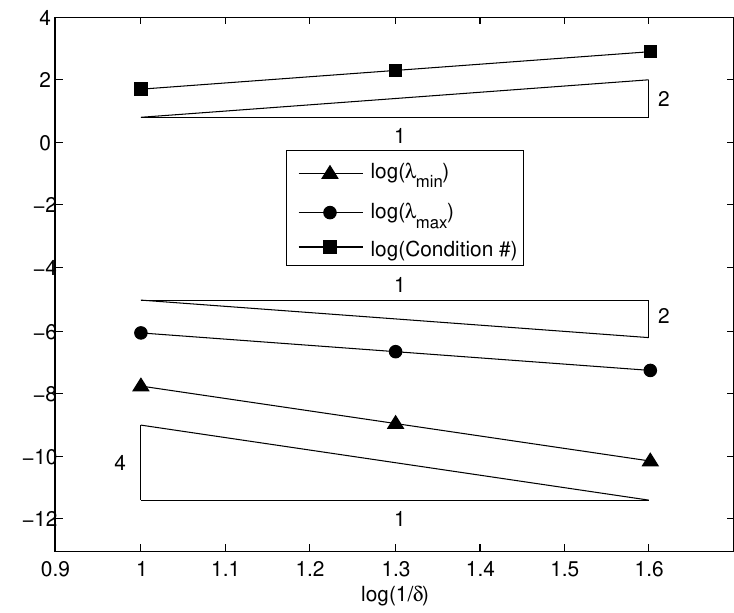}}
 \label{fig:auu2d:delta}
}
\caption{Condition number for $K$ in 2D for (a) fixed $\delta$, allowing $h$ to vary, and (b) fixed $h$, allowing $\delta$ to vary. The condition number is only weakly h-dependent, but varies with $\delta^{-2}$. These figures are plotted from data in Table \ref{table:auu2d}.}
\label{fig:auu2d}
\end{figure}

\subsubsection{Results in One Dimension} \label{sec:auu1D}

Results in this section appear in Tables \ref{table:auu} and Figures \ref{fig:auu}, where we consider the $h \ll \delta$ regime. We show results for both piecewise constant and piecewise linear shape functions to verify that the choice of shape function apparently does not influence the conditioning of the discrete system. We first compute the condition number of $K$ for different $h$ while holding $\delta$ fixed, and observe that the condition number of $K$ is only weakly $h$-dependent. The minimum and the maximum eigenvalues depend linearly on $h$, with a slope of nearly unity. We then compute the condition number of $K$ for different values of $\delta$ while holding $h$ fixed, and observe that the condition number varies with $\delta^{-2}$. Further, the maximum eigenvalue is proportional to $\delta$, in agreement with Lemma \ref{lem:upperBound}. Lastly, the minimum eigenvalue varies as $\delta^{3}$, in agreement with \eqref{poincareDelta} and our finding of $m=3$ in \S\ref{sec:choose_m}. This suggests that, in one dimension, we should redefine $C(x,x^\prime)$ in \eqref{eq:pdKernel} as
\begin{align*}
C(x,x^\prime) = \left\{
\begin{array}{cl}
\delta^{-3}, & \norm{x-x^\prime} \leq \delta \\
0, & \textrm{otherwise.}
\end{array}
\right.,
\end{align*}
for consistency with the weak form of the classical (local) Laplace operator in the limit $\delta \rightarrow 0$.

\subsubsection{Results in Two Dimensions}

Results in this section appear in Tables \ref{table:auu2d} and Figures \ref{fig:auu2d}. We consider only piecewise constant shape functions in 2D. We first compute the condition number of $K$ for different $h$ while holding $\delta$ fixed, and observe that minimum and maximum eigenvalues depend linearly on $h$ with a slope of approximately two, and again the condition number of $K$ depends only weakly upon the mesh size. We then compute the condition number of $K$ for different values of $\delta$ while holding $h$ fixed, and observe that the condition number again varies as $\delta^{-2}$, in agreement with \eqref{condNumberA}. Further, the maximum eigenvalue is proportional to $\delta^2$ in agreement with Lemma \ref{lem:upperBound}, and the minimum eigenvalue is proportional to $\delta^{4}$ in agreement with \eqref{poincareDelta}.

\section{A Nonlocal Two-Domain Problem} \label{sec:DDPD}

\begin{figure}
 \begin{center}
 \psfrag{1}{$\Omega_1$}  \psfrag{2}{$\Omega_2$}  \psfrag{x}{$\Gamma$}  \psfrag{4}{$\mathcal{B} {\Omega_2}$}  \psfrag{5}{$\Gamma_1$}  \psfrag{6}{$\Gamma_2$}  \psfrag{7}{$\mathcal{B} {\Omega_1}$}  \psfrag{8}{$\Gamma_N$} \psfrag{9}{$\Gamma_S$}
 \scalebox{0.75}{\includegraphics{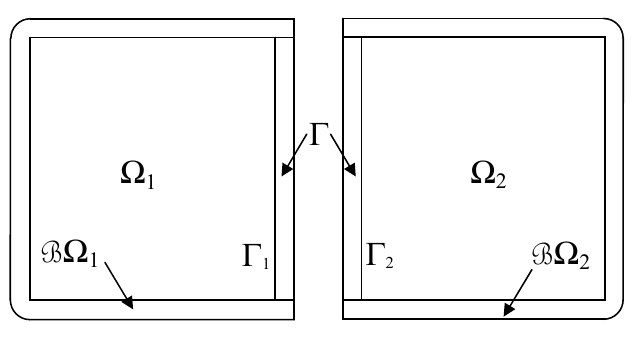}}
 \end{center}
 \caption{A nonlocal two-domain problem. This is a decomposition of the domain $\Omega$ in Figure \ref{fig:1Domain} into overlapping subdomains $\Omega^{(1)}$, $\Omega^{(2)}$, and $\mathcal{B}\Omega$ into overlapping nonlocal boundaries $\mathcal{B}\Omega_1$, $\mathcal{B}\Omega_2$. Note that the interface $\Gamma$ is $d=2$-dimensional.}\label{fig:2Domain}
\end{figure}

We will construct a weak (variational) formulation for nonlocal
domain decomposition.  We first identify the pieces of the domain for
this decomposition.  Consider the domain in Figure
\ref{fig:2Domain}. The nonlocal boundary of $\Omega$, $\mathcal{B}
\Omega$, is defined to be the closed region of thickness $\delta$
surrounding $\Omega$.  Let $\Gamma$ be the open region corresponding to
the interface between the two overlapping open subdomains
$\Omega^{(1)}$ and $\Omega^{(2)}$. We define the overlapping
subdomains $\Omega^{(i)},~~i=1, 2$, as the following:
\begin{equation*}
 \Omega^{(i)} := \Omega_i \cup \Gamma \cup \Gamma_i,
\end{equation*}
where $\Gamma_i$ is the open line segment adjacent to
$\Omega_i$ and $\Gamma$. Let $\mathcal{B}\Omega_i$ be the nonlocal closed
boundary of $\Omega_i$ that intersects $\mathcal{B}\Omega$.
The main domain decomposition contributions of this article, namely,
the equivalence of the one-domain weak and two-domain weak forms will
be proved next.

\subsection{Two-Domain Variational Form}
We present a two-domain weak formulation
of \eqref{eq:pdEquMotionWeak} and prove its equivalence to the
original single-domain formulation \eqref{eq:pdEquMotionWeak}.
We define the spaces, $i= 1, 2$,
\begin{align} \label{eqn:V_super_i}
 V^{(i)}   &:= \left\{ v \in L_2(\OmegaNLCi) \; : \; v \vert_{\mathcal{B} \Omega_{i}} = 0 \right\},\\
\nonumber V^{(i),0} &:= \left\{ v \in L_2(\OmegaNLCi) \; : \; v \vert_{\mathcal{B} \Omega_{i} \cup \Gamma \cup \Gamma_i} = 0 \right\},\\
\nonumber \Lambda & := \left\{ \mu \in L_2(\Gamma): \mu = v \vert_\Gamma~ \text{for some suitable}~ v \in L_{2,0}(\OmegaNLC) \right\}.
\end{align}
We can reduce the outer domain of integration in the bilinear form from $\OmegaNLC$ to $\Omega$ by taking advantage of the zero Dirichlet boundary condition. Namely,
\begin{eqnarray}
a(u,v) & = &  -
\int_{\OmegaNLC} \left\{ \int_{\OmegaNLC} \myChi ~
[u(\bdxp) - u(\bdx)]~d\bdxp \right\} ~ v(\bdx)~d\bdx \nonumber \\
& = &  - \int_{\Omega} \left\{ \int_{\OmegaNLC} \myChi ~
[u(\bdxp) - u(\bdx)]~d\bdxp \right\} ~ v(\bdx)~d\bdx, \quad v \in V
\label{reducedBilinearForm}.
\end{eqnarray}
Therefore, our construction is based on the reduced bilinear form
\eqref{reducedBilinearForm}. We further define a bilinear form
$a_{\Omega^{(i)}}(u,v): V \times V \rightarrow \REAL$ as follows:
\begin{eqnarray} \nonumber a_{\Omega^{(i)}}(u,v) & := & -
\int_{\Omega_i} \left\{ \int_{\Omega^{(i)} \cup \mathcal{B} \Omega_{i}}
\chi_\delta(\bdx - \bdxp) ~
[u(\bdxp) - u(\bdx)]~d\bdxp \right\} ~ v(\bdx)~d\bdx \\ \label{eq:bilinearForm}
& & - \frac{1}{2} \int_{\Gamma} \left\{ \int_{\OmegaNLC}
\chi_\delta(\bdx - \bdxp) ~
[u(\bdxp) - u(\bdx)]~d\bdxp \right\} ~ v(\bdx)~d\bdx.
\end{eqnarray}

We utilize the following notation to suppress the integrals in \eqref{eq:bilinearForm}:
\begin{eqnarray}
 a_{\Omega_i}(u,v) &:=& - \int_{\Omega_i} \left\{ \int_{\Omega^{(i)} \cup \mathcal{B} \Omega_{i}} \chi_\delta(\bdx - \bdxp) ~ [u(\bdxp) - u(\bdx)]~d\bdxp \right\} ~ v(\bdx)~d\bdx \\
 a_{\Gamma}(u,v) & := & - \int_{\Gamma} \left\{ \int_{\OmegaNLC} \chi_\delta(\bdx - \bdxp) ~ [u(\bdxp) - u(\bdx)]~d\bdxp \right\} ~ v(\bdx)~d\bdx
\end{eqnarray}
We can now represent the bilinear form \eqref{eq:bilinearForm} as:
$$
a_{\Omega^{(i)}} (u,v) = \frac{1}{2}~a_\Gamma(u,v) + a_{\Omega_i}(u,v).
$$

\begin{rem}
The test function $v_i = v \vert_{\Omega_i} \in V^{(i),0},~i=1, 2$ has its support only in $\Omega_i$ not $\Omega^{(i)}$. Hence, we may reduce the bilinear form \eqref{eq:bilinearForm} to
\begin{equation}
 \label{eq:bilinearFormReduced}
 a_{\Omega^{(i)}}(u^{(i)},v_i) =  a_{\Omega_i}(u^{(i)},v_i).
\end{equation}
Although, $a_{\Gamma}(u^{(i)},v_i)$ may appear to create a coupling between the subdomains, no such coupling exists because $v_i$ vanishes on $\Gamma$.  Therefore, subdomain condition \eqref{eq:DCi} is an expression only for subdomain $\Omega^{(i)}$.
\end{rem}

Now, we state the two-domain weak form following the notation of \cite{Quarteroni:1999:DDforPDEs}: Find $u^{(i)} \in V^{(i)}$, $i=1, 2$:
\begin{subequations} \label{eq:2DomainPD}
\begin{align}
a_{\Omega^{(i)}}(u^{(i)},v_i) & = (b,v_i)_{\Omega_i} & \forall v_i \in V^{(i),0}, \label{eq:DCi}\\
u^{(1)} & =  u^{(2)} & \mbox{ on } \overline{\Gamma}, \label{eq:TCtrivial}\\
\sum_{i=1, 2} a_{\Omega^{(i)}}(u^{(i)},\mathcal{R}^{(i)} \mu) & = (b,\mu)_{\Gamma} + \sum_{i=1, 2} (b,\mathcal{R}^{(i)}\mu)_{\Omega_i} & \forall \mu \in \Lambda. \label{eq:TCcombi}
\end{align}
\end{subequations}
where $\mathcal{R}^{(i)}$ denotes any possible extension operator from
$L_2(\Gamma)$ to $V^{(i)}$. An extension operator
$\mathcal{R}^{(i)}: L_2(\Gamma) \rightarrow V^{(i)}$ is defined to be an operator
which satisfies $(\mathcal{R}^{(i)} \eta )\vert_\Gamma = \eta$ for
$\eta \in L_2(\Gamma).$
Next, we will show that the one- and two-domain weak forms are equivalent. The proof for the local case can be found in \cite[Lemma 1.2.1]{Quarteroni:1999:DDforPDEs}.

\begin{lem} \label{1DomainWeak_2DomainWeak}
The problems \eqref{eq:pdEquMotionWeak} and \eqref{eq:2DomainPD} are equivalent.\
\end{lem}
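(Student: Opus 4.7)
The plan is to mirror the classical local argument of Quarteroni and Valli \cite[Lemma 1.2.1]{Quarteroni:1999:DDforPDEs}, adapted to the nonlocal setting. The key bookkeeping ingredient is the decomposition
\[
a(u,v) \;=\; a_{\Omega_1}(u,v) + a_{\Omega_2}(u,v) + a_{\Gamma}(u,v),
\]
which follows from \eqref{reducedBilinearForm} by splitting the outer integration $\Omega = \Omega_1 \cup \Gamma \cup \Omega_2$ (a disjoint union up to measure zero). In particular
\[
a_{\Omega^{(1)}}(u,v) + a_{\Omega^{(2)}}(u,v) = a(u,v),
\]
where the factor $\tfrac{1}{2}$ in front of $a_\Gamma$ in \eqref{eq:bilinearForm} is precisely what is needed to avoid double-counting the interface contribution.

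For the forward implication, assume $u \in V$ solves \eqref{eq:pdEquMotionWeak}. Set $u^{(i)} := u|_{\overline{\overline{\Omega^{(i)}}}}$. The matching condition \eqref{eq:TCtrivial} is immediate, since both restrictions come from the single function $u$. For \eqref{eq:DCi}, take $v_i \in V^{(i),0}$ and let $\tilde v_i$ be its extension by zero to $\OmegaNLC$; then $\tilde v_i \in V$ and, since $\tilde v_i$ vanishes on $\Gamma$, the reduced bilinear form \eqref{reducedBilinearForm} restricts the outer integral to $\Omega_i$, giving $a_{\Omega_i}(u,\tilde v_i)=(b,\tilde v_i)_{\Omega_i}$, which by \eqref{eq:bilinearFormReduced} is exactly \eqref{eq:DCi}. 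For \eqref{eq:TCcombi}, pick $\mu \in \Lambda$ and let $\tilde\mu \in V$ be an extension whose restriction to $\Omega^{(i)}$ agrees with $\mathcal{R}^{(i)}\mu$; testing \eqref{eq:pdEquMotionWeak} with $\tilde\mu$ and using the decomposition above yields the sum identity in \eqref{eq:TCcombi}.

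For the reverse implication, suppose $(u^{(1)},u^{(2)})$ solves \eqref{eq:2DomainPD}. By \eqref{eq:TCtrivial} we may glue the $u^{(i)}$ into a single function $u \in V$ on $\OmegaNLC$. Given an arbitrary $v \in V$, decompose $v = v_1 + v_2 + \tilde v_\Gamma$, where $v_i := v\,\mathbf{1}_{\Omega_i} \in V^{(i),0}$ (extended by zero) and $\tilde v_\Gamma$ is a chosen extension of $\mu := v|_\Gamma \in \Lambda$ agreeing with $\mathcal{R}^{(i)}\mu$ on each $\Omega^{(i)}$. Apply \eqref{eq:DCi} with test function $v_i$ on each subdomain, and apply \eqref{eq:TCcombi} with $\mu$. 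Summing the three equations and using the decomposition of $a(u,v)$ displayed at the top, together with $(b,v)=(b,v_1)_{\Omega_1}+(b,v_2)_{\Omega_2}+(b,\mu)_\Gamma$, produces $a(u,v)=(b,v)$ for every $v\in V$, which is \eqref{eq:pdEquMotionWeak}.

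The main obstacle is verifying that the factor $\tfrac{1}{2}$ on the interface term balances correctly when \eqref{eq:TCcombi} is combined with the two subdomain equations; this is the nonlocal analogue of the classical Neumann-type interface condition, and it hinges on the symmetry $C(\bdx,\bdxp) = C(\bdxp,\bdx)$ together with the fact that inner integrals in the subdomain bilinear forms $a_{\Omega^{(i)}}$ extend over the full $\OmegaNLC$ (not merely over $\overline{\overline{\Omega^{(i)}}}$), so no nonlocal interaction across $\Gamma$ is lost. Modulo that careful accounting, the proof is a direct bilinear manipulation.
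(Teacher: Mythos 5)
Your overall strategy mirrors the paper's: the forward direction by zero-extension and partitioning of the outer integral, the reverse direction by gluing and splitting the test function, with the factor $\tfrac{1}{2}$ on $a_\Gamma$ doing the bookkeeping. The forward direction is essentially identical to the paper's argument and is fine.

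In the reverse direction there is a concrete error in the decomposition of the test function. You set $v_i := v\,\mathbf{1}_{\Omega_i}$ and choose $\tilde v_\Gamma$ to agree with $\mathcal{R}^{(i)}\mu$ on each $\Omega^{(i)}$, and then claim $v = v_1 + v_2 + \tilde v_\Gamma$. But $\mathcal{R}^{(i)}\mu$ is generally nonzero on $\Omega_i$, so $v_1 + v_2 + \tilde v_\Gamma = v + \mathcal{R}^{(i)}\mu$ there, not $v$. With this decomposition the subdomain equations \eqref{eq:DCi} and the transmission condition \eqref{eq:TCcombi} do not sum to $a(u,v)=(b,v)$: on the left one obtains $\sum_i a_{\Omega^{(i)}}(u^{(i)},v_i + \mathcal{R}^{(i)}\mu)$ which differs from $\sum_i a_{\Omega^{(i)}}(u^{(i)},v)$, and on the right a spurious $\sum_i (b,\mathcal{R}^{(i)}\mu)_{\Omega_i}$ remains. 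The paper avoids this by first establishing $a(u,v)=\sum_i a_{\Omega^{(i)}}(u^{(i)},v)$ from partitioning the outer integral, and then adding and subtracting $\mathcal{R}^{(i)}\mu$ in the second slot, which makes $v - \mathcal{R}^{(i)}\mu \in V^{(i),0}$ the correct test function for \eqref{eq:DCi}. If you replace your $v_i$ by $(v - \mathcal{R}^{(i)}\mu)|_{\Omega_i}$ the telescoping works and everything cancels. Finally, your closing remark attributes the interface balance to the symmetry $C(\bdx,\bdxp)=C(\bdxp,\bdx)$; in fact the paper's proof never invokes kernel symmetry at this step, only the disjoint partition $\Omega = \Omega_1 \cup \Gamma \cup \Omega_2$ of the outer integral and the $\tfrac{1}{2}$ weighting of $a_\Gamma$ in the definition of $a_{\Omega^{(i)}}$.
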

\begin{proof}
$\eqref{eq:pdEquMotionWeak} \Rightarrow \eqref{eq:2DomainPD}:$\\
Let $u^{(i)} = u \vert_{\Omega^{(i)}}\in V^{(i)}$ and $v_i = v \vert_{\Omega_i} \in V^{(i),0},~i=1, 2$. Extend these functions by zero extension;
\begin{eqnarray*}
\theta^{(i)} u^{(i)} & := &  \left\{
\begin{array}{ll}
u^{(i)},  & \text{in}~\Omega^{(i)} \\
0, & \text{otherwise}
 \end{array}\right.\\
\theta_i v_i & := & \left\{
\begin{array}{ll}
v_i,  & \text{in}~\Omega_i \\
0, & \text{otherwise}.
 \end{array}\right.
\end{eqnarray*}
By LHS of \eqref{eq:pdEquMotionWeak} and using $v_i \vert_{\Gamma} = 0$:
\begin{eqnarray*}
 a(\theta^{(i)} u^{(i)}, \theta_i v_i) & = & - \int_\Omega \left\{ \int_{\OmegaNLC} \chi_\delta(\bdx - \bdxp) ~ [\theta^{(i)} u^{(i)}(\bdxp) - \theta^{(i)} u^{(i)}(\bdx)]~d\bdxp \right\} ~ \theta_i v_i(\bdx)~d\bdx \\
 &=& a_{\Omega_i} (u^{(i)}, v_i) \\
 &=& \frac{1}{2}~a_{\Gamma}(u^{(i)},v_i) + a_{\Omega_i} (u^{(i)}, v_i) \\
 &=& a_{\Omega^{(i)}}(u^{(i)}, v_i)
\end{eqnarray*}
By RHS of \eqref{eq:pdEquMotionWeak},
\begin{equation*}
(b, \theta_i v_i) =  (b,v_i)_{\Omega_i}.
\end{equation*}
Hence, \eqref{eq:DCi} is satisfied. \eqref{eq:TCtrivial} is trivially satisfied.

Further, for $\mu \in \Lambda$ define the function $\mathcal{R}\mu$ as:
\begin{align*}
 \mathcal{R}\mu := \left\{
 \begin{array}{ll}
 \mathcal{R}^{(1)}\mu, & \mbox{in } \Omega^{(1)} \\
 \mathcal{R}^{(2)}\mu, & \mbox{in } \Omega^{(2)}.
 \end{array}\right.\
\end{align*}
Since $\mathcal{R}\mu$ lives only in $\Omega_1 \cup \Gamma_1 \cup \Gamma \cup \Gamma_2 \cup \Omega_2$, it vanishes on $\mathcal{B} \Omega$. Therefore, $\mathcal{R}\mu \in V$.

From \eqref{eq:pdEquMotionWeak}, partitioning the outer integral and using $\mathcal{R}^{(1)}\mu =  \mathcal{R}^{(2)}\mu = \mu$ on $\Gamma$, we obtain the LHS of \eqref{eq:TCcombi}:
\begin{eqnarray*}
 a(u,\mathcal{R} \mu ) &=& \frac{1}{2}~a_{\Gamma}(u^{(1)}, \mu) + \frac{1}{2}~a_{\Gamma}(u^{(2)}, \mu) + \sum_{i=1, 2} a_{\Omega_i}(u^{(i)}, \mathcal{R}^{(i)} \mu)\\
 &=& \frac{1}{2}~a_{\Gamma} (u^{(1)}, \mathcal{R}^{(1)} \mu) + \frac{1}{2}~a_{\Gamma} (u^{(2)}, \mathcal{R}^{(2)} \mu) + \sum_{i=1, 2} a_{\Omega_i} (u^{(i)}, \mathcal{R}^{(i)} \mu) \\
 &=& a_{\Omega^{(1)}}(u^{(1)},\mathcal{R}^{(1)} \mu) +
a_{\Omega^{(2)}}(u^{(2)},\mathcal{R}^{(2)} \mu).
\end{eqnarray*}
Likewise, from \eqref{eq:pdEquMotionWeak} and partitioning the integral, we obtain the RHS of \eqref{eq:TCcombi}:
$$
(b,\mathcal{R} \mu)_\Omega =
(b,\mathcal{R}^{(1)}\mu)_{\Omega_1} + (b,\mathcal{R}^{(2)} \mu)_{\Omega_2} +
(b,\mu)_{\Gamma}.
$$
Hence, we obtain the transmission condition \eqref{eq:TCcombi}.
\newline
\linebreak
$\eqref{eq:2DomainPD} \Rightarrow \eqref{eq:pdEquMotionWeak}:$\\
Let $u_\Gamma := u^{(1)}\vert_\Gamma$ (due to \eqref{eq:TCtrivial}, we also have $u_\Gamma = u^{(2)}\vert_\Gamma$) and
\begin{align} \label{defn:u}
u := \left\{
\begin{array}{ll}
u^{(1)}, & \mbox{in } \Omega_1 \\
u^{(2)}, & \mbox{in } \Omega_2 \\
u_\Gamma, & \mbox{in } \Gamma. \\
\end{array}\right.\
\end{align}
We partition the outer integral, use \eqref{defn:u} and the transmission condition \eqref{eq:TCtrivial}. Then, for $v \in V$, LHS in
\eqref{eq:pdEquMotionWeak} becomes the following:
\begin{eqnarray}
a(u, v) & = &
\frac{1}{2}~a_{\Gamma} (u, v) + \frac{1}{2}~a_{\Gamma} (u, v) +
\sum_{i=1, 2} a_{\Omega_i} (u, v) \nonumber \\
& = &
\frac{1}{2}~a_{\Gamma} (u_\Gamma, v) + \frac{1}{2}~a_{\Gamma} (u_\Gamma, v) +
\sum_{i=1, 2} a_{\Omega_i} (u^{(i)}, v) \nonumber \\
& = & \frac{1}{2}~a_{\Gamma} (u^{(1)}, v) + \frac{1}{2}~a_{\Gamma} (u^{(2)}, v) +
\sum_{i=1, 2} a_{\Omega_i} (u^{(i)}, v) \nonumber \\
& = & \sum_{i=1, 2} a_{\Omega^{(i)}}(u^{(i)},v).  \label{eq:LHS1Domain}
\end{eqnarray}
Let $\mu := v \vert_\Gamma$. Then, $v - \mathcal{R}^{(i)}\mu \in V^{(i),0}$.  First, we add and subtract $\mathcal{R}^{(i)} \mu$ to the second slot of the bilinear form in \eqref{eq:LHS1Domain} and apply the domain conditions \eqref{eq:DCi} for $v - \mathcal{R}^{(i)}\mu$. Then, we apply the transmission condition \eqref{eq:TCcombi} and use $v \vert_\Gamma = \mu$. Hence, we arrive at the RHS in \eqref{eq:pdEquMotionWeak}:
\begin{eqnarray*}
\sum_{i=1, 2} a_{\Omega^{(i)}}(u^{(i)},v) & = & \sum_{i=1, 2}
a_{\Omega^{(i)}}(u^{(i)}, v - \mathcal{R}^{(i)} \mu)
+ \sum_{i=1, 2} a_{\Omega^{(i)}}(u^{(i)}, \mathcal{R}^{(i)} \mu) \\
& = & \sum_{i=1, 2} (b, v - \mathcal{R}^{(i)} \mu)_{\Omega_i} +
\sum_{i=1, 2} a_{\Omega^{(i)}}(u^{(i)}, \mathcal{R}^{(i)} \mu) \\
& = & \sum_{i=1, 2} (b, v - \mathcal{R}^{(i)} \mu)_{\Omega_i} +
(b,\mu)_{\Gamma} + \sum_{i=1, 2} (b,\mathcal{R}^{(i)}\mu)_{\Omega_i}
\\
& = & (b,\mu)_{\Gamma} + \sum_{i=1, 2} (b,v)_{\Omega_i} \\
& = & (b,v).
\end{eqnarray*}
\end{proof}


\section{Towards Nonlocal Substructuring} \label{sec:NonlocalSubstructure}

Here we write out the linear algebraic representations arising from
the two-domain weak form \eqref{eq:2DomainPD}, identifying the
discrete subdomain equations and transmission conditions. We then
construct a nonlocal Schur complement, discuss its condition number as
a function of $h$, $\delta$, and provide supporting numerical
experiments.

\subsection{Linear Algebraic Representations}
We consider a finite element discretization of \eqref{eq:2DomainPD}.
Letting $V_h^{(i)}$ denote the finite element space corresponding to
$\Omega^{(i)}$, we define:
\begin{eqnarray*}
V_h^{(i),0} & := & \left\{ v_h \in V_h^{(i)} :~ v_{h}
\vert_{\mathcal{B} \Omega_i \cup \Gamma \cup \Gamma_i}= 0 \right\}\\
\Lambda_h & := & \left\{  \mu_{h} \in L_2(\Gamma):~\mu_{h} = v_h\vert_{\Gamma}~\text{for some suitable}~ v_h \in V_h \right\}.
\end{eqnarray*}
Here, $\Lambda_h$ denotes a finite element discretization of $L_2(\Gamma)$.
We see that the finite element formulation of \eqref{eq:2DomainPD}
can be written as:
\begin{subequations}
\begin{align} \label{eqn:2domainFEM_1}
a_{\Omega^{(i)}}(u_h^{(i)},v_{i,h}) & =  (b,v_{i,h})_{\Omega_i} & \forall v_{i,h} \in V_h^{(i),0}, \\ \label{eqn:2domainFEM_2}
u_h^{(1)} & =  u_h^{(2)} & \mbox{ on } \overline{\Gamma},\\ \label{eqn:2domainFEM_3}
\sum_{i=1, 2} a_{\Omega^{(i)}}(u_h^{(i)},\mathcal{R}_{h}^{(i)} \mu_h) & = (b,\mu_h)_{\Gamma} + \sum_{i=1, 2} (b,\mathcal{R}_h^{(i)}\mu_h)_{\Omega_i} & \forall \mu_h \in \Lambda_h.
\end{align}
\end{subequations}
where $\mathcal{R}_h^{(i)}$ denotes any possible extension operator from $\Gamma_h$ to $V_h^{(i)}$. Following standard practice, we take these extension operators to be the finite element interpolant, which is defined to be equal to $\mu_h$ at the nodes in the thick interface $\Gamma$ and zero on the internal nodes of $\Omega_i$. If we number nodes in $\Omega_1$ first, nodes in $\Omega_2$ second, and nodes in $\Gamma$ last, we will arrive at a global stiffness matrix that takes the traditional block arrowhead form:
\begin{align} \label{stiffnessMatrix}
  K = \left[
        \begin{array}{ccc}
          K_{11} & 0 & K_{1 \Gamma} \\
          0 & K_{22} & K_{2 \Gamma} \\
          K_{\Gamma 1} & K_{\Gamma 2} & K_{\Gamma \Gamma} \\
        \end{array}
      \right]
      \left[
        \begin{array}{c}
          u_1 \\
          u_2 \\
          u_\Gamma\\
        \end{array}
      \right] =
      \left[
        \begin{array}{c}
          f_1 \\
          f_2 \\
          f_\Gamma\\
        \end{array}
      \right].
\end{align}
The first two block rows of the matrix in \eqref{stiffnessMatrix} arise from discretizing \eqref{eqn:2domainFEM_1}, and the last block row arises from discretizing \eqref{eqn:2domainFEM_3}.

\subsection{Discrete Energy Minimizing Extension and the Schur
Complement Conditioning}

In order to study the conditioning of the Schur complement in the
nonlocal setting, we define an analog of the discrete harmonic
extension in the local case. 
\begin{defn} \label{defn:EME}
For a given $q \in \Lambda_h$,
$E_i: \Lambda_h \rightarrow V_h^{(i)}$ defines a discrete energy
minimizing extension into $\Omega_i$,~if
\begin{eqnarray} \label{defn:EMEprop1}
E_i(q)|_\Gamma & = & q,\\
a_i(E_i(q),v) & = & 0, \quad v \in V_h^{(i),0}, \nonumber
\end{eqnarray}
where $a_i(\cdot, \cdot)$ denotes the bilinear form restricted
to $\OmegaNLCi$. Namely,
\begin{equation*}
a_i(u,v) =
\int_{\OmegaNLCi} \left\{ \int_{\OmegaNLCi} \myChi ~
[u(\bdxp) - u(\bdx)]~d\bdxp \right\} ~ v(\bdx)~d\bdx.
\end{equation*}
\end{defn}

The energy minimizing extension $E_i(q)$ of $q$ defines a canonical
bilinear form $s_i(q,q) : \Lambda_h \times \Lambda_h \rightarrow
\REAL$ that is associated to the interface $\Gamma$ whose
discretization corresponds to the subdomain Schur complement matrix
$S^{(i)}$ below. Let $\underline{q}$ denote the vector representation
of $q$.
\begin{eqnarray} \label{defn:s}
s_i(q,q) & := & a_i(E_i(q),E_i(q))\\
\underline{q}^t S^{(i)} \underline{q} & = & a_i(E_i(q_h),E_i(q_h)).
\label{SchurComplement}
\end{eqnarray}

Let us denote the restriction of $u \in V_h^{(i)}$ to
$\Gamma$ by $u_\Gamma := u|_\Gamma$. The following discussion will
reveal the reason why $E_i(u_\Gamma)$ is called an energy minimizing
extension. Let us consider the following decomposition of $u$:
\begin{equation} \label{eqn:decompositionEME}
u = [u - E_i(u_\Gamma)] + E_i(u_\Gamma).
\end{equation}
Since $\left( u - E_i(u_\Gamma) \right)|_\Gamma = 0$, by
Definition \ref{defn:EME} we have:
\begin{equation} \label{eqn:aOrthogonality}
a_i(u - E_i(u_\Gamma), E_i(u_\Gamma)) = 0.
\end{equation}
Using \eqref {eqn:decompositionEME} and \eqref{eqn:aOrthogonality}, we have
the energy minimizing property of $E_i(u_\Gamma)$ among
$u \in  V_h^{(i)}$ with $u|_\Gamma = u_\Gamma$:
\begin{eqnarray}
a_i(u,u) & = & a_i(u - E_i(u_\Gamma), u - E_i(u_\Gamma)) +
2 \, a_i(u - E_i(u_\Gamma), E_i(u_\Gamma)) + a_i(E_i(u_\Gamma), E_i(u_\Gamma))
\nonumber \\
& \geq & a_i(E_i(u_\Gamma), E_i(u_\Gamma)) \label{ineq:minPropEME}.
\end{eqnarray}
Therefore, using \eqref{ineq:minPropEME}, \eqref{defn:s} and
\eqref{eq:upperBound}, we have:
\begin{eqnarray*}
s_i(u_\Gamma, u_\Gamma)
\leq a_i(u,u) \leq \overline{\lambda} ~ \delta^d ~\norm{u}_{L_{2}(\OmegaNLCi)}^2,
\end{eqnarray*}
for all $u \in V_h^{(i)}$, in particular, for $u = u_\Gamma$.
Hence,
\begin{equation}
s_i(u_\Gamma, u_\Gamma) \leq
\overline{\lambda} ~ \delta^d ~\norm{u}_{L_{2}(\Gamma)}^2.
\end{equation}
For the lower bound, we simply use \eqref{defn:EMEprop1} and
\eqref{poincareDelta}:
\begin{equation} \label{sLowerBd}
\lambda_{refined} \, \delta^{d+2} \norm{u}_{L_2(\Gamma)}^2 \leq
\lambda_{refined} \, \delta^{d+2} \norm{E_i(u_\Gamma)}_{L_2(\OmegaNLCi)}^2 \leq
a_i(E_i(u_\Gamma), E_i(u_\Gamma)) = s_i(u_\Gamma, u_\Gamma).
\end{equation}
We have proved the following spectral equivalence result:
\begin{theorem}
For any $q \in \Lambda_h \subset L_2(\Gamma)$, we have:
\begin{equation} \label{spectranEquivS}
\lambda_{refined} ~ \delta^{d+2} \leq
\frac{s_i(q, q)}{\norm{q}_{L_2(\Gamma)}^2} \leq \overline{\lambda}~\delta^d.
\end{equation}
Thus, the condition number of the Schur complement matrix $S_\Gamma := S^{(1)} +  S^{(2)}$ has the following bound:
\begin{equation*} \label{condNumberS}
\kappa(S_\Gamma) \lesssim \delta^{-2}.
\end{equation*}
\end{theorem}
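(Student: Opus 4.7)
The plan is to establish the two-sided spectral bound \eqref{spectranEquivS} on $s_i(\cdot,\cdot)$ for each subdomain $i=1,2$ and then sum the subdomain contributions to deduce the matrix bound for $S_\Gamma$. For the upper bound I would exploit the energy-minimizing property \eqref{ineq:minPropEME}: for any $\tilde{q} \in V_h^{(i)}$ with $\tilde{q}|_\Gamma = q$ one has $s_i(q,q) = a_i(E_i(q),E_i(q)) \leq a_i(\tilde{q},\tilde{q})$. I would choose $\tilde{q}$ to be the finite element extension of $q$ that vanishes at every interior node of $\Omega_i$, so that its support is essentially $\Gamma$ and $\norm{\tilde{q}}_{L_2(\OmegaNLCi)}^2 \lesssim \norm{q}_{L_2(\Gamma)}^2$. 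Applying Lemma \ref{lem:upperBound} on $\OmegaNLCi$ then gives $a_i(\tilde{q},\tilde{q}) \leq \overline{\lambda}\delta^d \norm{\tilde{q}}_{L_2(\OmegaNLCi)}^2$, which chains to the desired upper bound $s_i(q,q) \leq \overline{\lambda} \delta^d \norm{q}_{L_2(\Gamma)}^2$.

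For the lower bound I note that $E_i(q) \in V_h^{(i)} \subset L_{2,0}(\OmegaNLCi)$, since the nonlocal Dirichlet condition on $\mathcal{B}\Omega_i$ is built into $V_h^{(i)}$. Applying the refined nonlocal Poincar\'{e} inequality \eqref{poincareDelta} on $\OmegaNLCi$ yields
\[
\lambda_{refined}\, \delta^{d+2}\, \norm{E_i(q)}_{L_2(\OmegaNLCi)}^2 \leq a_i(E_i(q),E_i(q)) = s_i(q,q),
\]
and the trivial inclusion $\norm{q}_{L_2(\Gamma)} \leq \norm{E_i(q)}_{L_2(\OmegaNLCi)}$ (using $\Gamma \subset \OmegaNLCi$ together with $E_i(q)|_\Gamma = q$ from \eqref{defn:EMEprop1}) completes the estimate.

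To pass from \eqref{spectranEquivS} to the condition number bound for $S_\Gamma$, I combine the two subdomain contributions through \eqref{SchurComplement}: $\underline{q}^t S_\Gamma \underline{q} = s_1(q,q) + s_2(q,q)$. Together with the standard fact that the interface mass matrix $M_\Gamma$ satisfies $\underline{q}^t M_\Gamma \underline{q} \sim \norm{q}_{L_2(\Gamma)}^2$ with $h$-dependent constants independent of $\delta$, the Rayleigh quotient of $S_\Gamma$ with respect to $M_\Gamma$ is sandwiched between constant multiples of $\lambda_{refined}\delta^{d+2}$ and $\overline{\lambda}\delta^d$, yielding $\kappa(S_\Gamma) \lesssim \delta^{-2}$. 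I expect the main technical obstacle to be in the upper-bound step: one must exhibit an extension $\tilde{q}$ that genuinely lies in $V_h^{(i)}$ and whose ambient $L_2$-mass is quantitatively controlled by $\norm{q}_{L_2(\Gamma)}$. For the nodally supported bases used in \S\ref{sec:auu_num_expts} this reduces to standard finite element bookkeeping once $\Gamma$ is mesh-aligned, but it is the only place where the $h$-structure of the discretization enters the argument, and a suboptimal choice of $\tilde{q}$ could in principle inflate the upper bound and spoil the $\delta^{-2}$ scaling.
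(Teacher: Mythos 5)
Your argument follows the paper's proof essentially verbatim: the upper bound via the energy-minimizing property \eqref{ineq:minPropEME} with the zero (interior-node) extension, and the lower bound via the refined Poincar\'{e} inequality \eqref{poincareDelta} applied to $E_i(q) \in L_{2,0}(\OmegaNLCi)$ together with $\norm{q}_{L_2(\Gamma)} \leq \norm{E_i(q)}_{L_2(\OmegaNLCi)}$. The only additions you make --- noting the hidden constant in $\norm{\tilde q}_{L_2(\OmegaNLCi)} \lesssim \norm{q}_{L_2(\Gamma)}$ for non-piecewise-constant bases, and spelling out the interface mass-matrix equivalence when passing from the Rayleigh quotient to $\kappa(S_\Gamma)$ --- are bookkeeping details the paper leaves implicit, not a different route.
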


\begin{rem}
The preceding condition number estimate indicates that the condition
number of the Schur complement is no greater than that of the
corresponding stiffness matrix; see \eqref{condNumberA}. This estimate
is not tight.  In fact, we numerically observe smaller condition
numbers for the Schur complement; see Table \ref{table:suu}.
\end{rem}

\subsubsection{The Nonlocal Schur Complement Matrix}
When the contributions from each subdomain are accounted separately,
we can write $K_{\Gamma \Gamma}$ in \eqref{stiffnessMatrix} as
$K_{\Gamma \Gamma} = K_{\Gamma \Gamma}^{(1)} + K_{\Gamma \Gamma}^{(2)}$.
Then,  $S^{(i)}$ in \eqref{SchurComplement} can be written as follows:
\begin{equation*}
S^{(i)} := K_{\Gamma \Gamma}^{(i)} - K_{\Gamma i} K_{ii}^{-1} K_{i \Gamma}.
\end{equation*}
The solution across the whole of $\Gamma$ is determined by solving $S_\Gamma u_\Gamma  = \tilde{f}$ for $u_\Gamma$, where
\begin{align*}
\tilde{f} & :=  f_{\Gamma} - K_{\Gamma 1} K_{1 1}^{-1} f_{1} - K_{\Gamma 2} K_{2 2}^{-1} f_{2}.
\end{align*}
We observed in \S\ref{sec:auu_num_expts} that the condition number of the stiffness matrix $K$ depends only weakly upon the mesh size $h$. Therefore, we expect that the condition number of the Schur complement matrix $S_\Gamma$ should at most depend only weakly upon $h$. We will examine this conjecture in \S\ref{sec:suu_num_expts}.

\begin{table}[ht]
\centering
\subtable[Fixed $\delta$, vary $h$.]{
\scriptsize{
\begin{tabular}{|r|c|c|c|c|c|c|c|}
  \hline
                              &                                  & \multicolumn{3}{|c|}{Piecewise Constant Shape Functions} & \multicolumn{3}{|c|}{Piecewise Linear Shape Functions} \\
  \multicolumn{1}{|c|}{$1/h$} & \multicolumn{1}{|c|}{$1/\delta$} & \multicolumn{1}{|c|}{$\lambda_{\min}$} & \multicolumn{1}{|c|}{$\lambda_{\max}$} & \multicolumn{1}{|c|}{Condition \#} & \multicolumn{1}{|c|}{$\lambda_{\min}$} & \multicolumn{1}{|c|}{$\lambda_{\max}$} & \multicolumn{1}{|c|}{Condition \#} \\
  \hline
     2000 & 20 & 1.64E-06 & 5.01E-05 & 3.06E+01 & 1.63E-06 & 4.97E-05 & 3.04E+01 \\
     4000 & 20 & 8.21E-07 & 2.50E-05 & 3.05E+01 & 8.21E-07 & 2.49E-05 & 3.03E+01 \\
     8000 & 20 & 4.12E-07 & 1.25E-05 & 3.04E+01 & 4.12E-07 & 1.25E-05 & 3.03E+01 \\
  \hline
\end{tabular}
}
\label{table:suu:h}
}
\subtable[Fixed $h$, vary $\delta$.]{
\scriptsize{
\begin{tabular}{|r|c|c|c|c|c|c|c|}
  \hline
                              &                                  & \multicolumn{3}{|c|}{Piecewise Constant Shape Functions} & \multicolumn{3}{|c|}{Piecewise Linear Shape Functions} \\
  \multicolumn{1}{|c|}{$1/h$} & \multicolumn{1}{|c|}{$1/\delta$} & \multicolumn{1}{|c|}{$\lambda_{\min}$} & \multicolumn{1}{|c|}{$\lambda_{\max}$} & \multicolumn{1}{|c|}{Condition \#} & \multicolumn{1}{|c|}{$\lambda_{\min}$} & \multicolumn{1}{|c|}{$\lambda_{\max}$} & \multicolumn{1}{|c|}{Condition \#} \\
  \hline
     8000 & 20 & 4.12E-07 & 1.25E-05 & 3.04E+01 & 4.12E-07 & 1.25E-05 & 3.03E+01 \\
     8000 & 40 & 1.03E-07 & 6.26E-06 & 6.07E+01 & 1.03E-07 & 6.23E-06 & 6.04E+01 \\
     8000 & 80 & 2.57E-08 & 3.13E-06 & 1.22E+02 & 2.57E-08 & 3.11E-06 & 1.21E+02 \\
  \hline
\end{tabular}
}
\label{table:suu:delta}
}
\caption{Condition number for $S_\Gamma$ in 1D for (a) fixed $\delta$, allowing $h$ to vary, and (b) fixed $h$, allowing $\delta$ to vary. This data is plotted in Figures \ref{fig:suu}.} \label{table:suu}
\end{table}

\begin{figure}[ht!]
\centering
\subfigure[Fixed $\delta$, vary $h$. ]{
 \scalebox{0.42}{\includegraphics{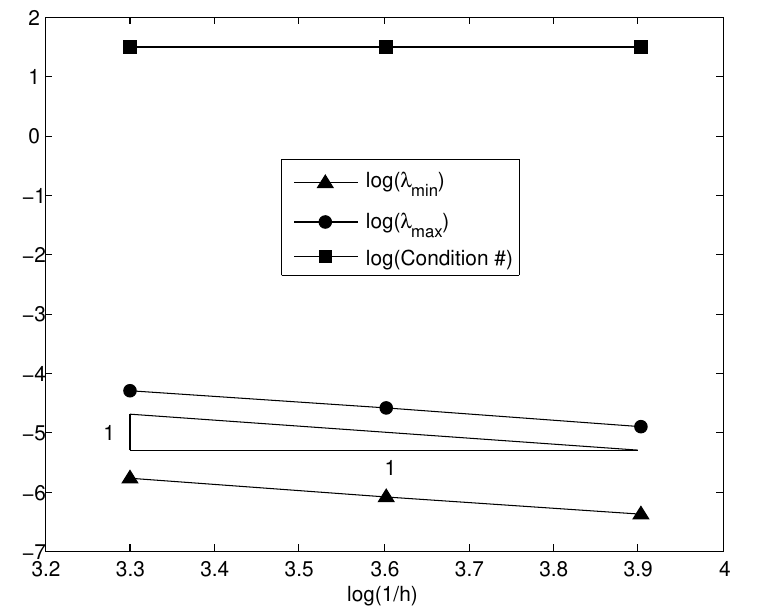}}
 \label{fig:suu:h}
}
\subfigure[Fixed $h$, vary $\delta$.]{
 \scalebox{0.42}{\includegraphics{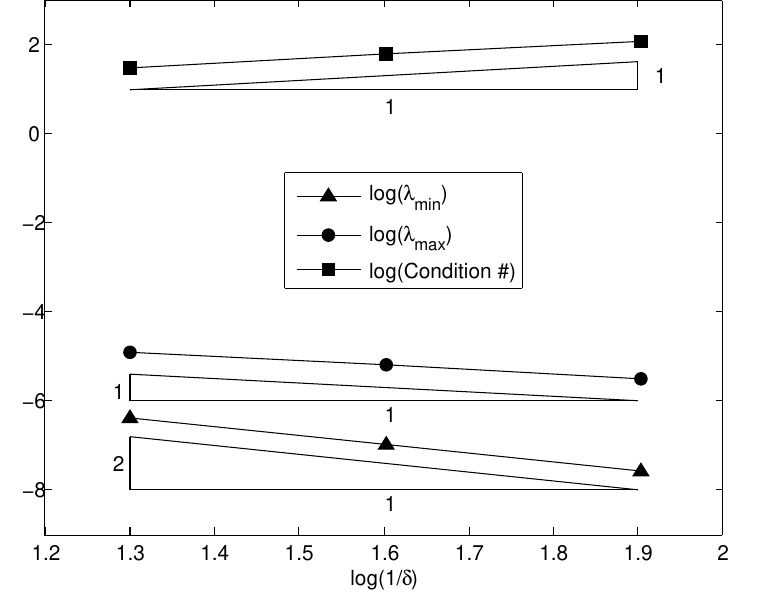}}
 \label{fig:suu:delta}
}
\caption{Condition number for $S_\Gamma$ in 1D for (a) fixed $\delta$, allowing $h$ to vary, and (b) fixed $h$, allowing $\delta$ to vary. The condition number of $S_\Gamma$ is only weakly $h$-dependent, but varies with $\delta^{-1}$. These figures are plotted from data in Table \ref{table:suu}. The plots for piecewise linear and piecewise constant shape functions are identical.} \label{fig:suu}
\end{figure}

\begin{table}[ht]
\centering
\subtable[Fixed $\delta$, vary $h$.]{
\scriptsize{
\begin{tabular}{|r|c|c|c|c|}
  \hline
  \multicolumn{1}{|c|}{$1/h$} & \multicolumn{1}{|c|}{$1 / \delta$} & \multicolumn{1}{|c|}{$\lambda_{\min}$} & \multicolumn{1}{|c|}{$\lambda_{\max}$} & \multicolumn{1}{|c|}{Condition \#} \\
  \hline
    50 & 10 & 1.14E-06 & 1.38E-05 & 1.21E+01 \\
   100 & 10 & 2.57E-07 & 3.48E-06 & 1.36E+01 \\
   200 & 10 & 6.61E-08 & 8.70E-07 & 1.32E+01 \\
  \hline
\end{tabular}
}
\label{table:suu2d:h}
}
\subtable[Fixed $h$, vary $\delta$.]{
\scriptsize{
\begin{tabular}{|r|c|c|c|c|c|}
  \hline
  \multicolumn{1}{|c|}{$1/h$} & \multicolumn{1}{|c|}{$1 / \delta$} & \multicolumn{1}{|c|}{$\lambda_{\min}$} & \multicolumn{1}{|c|}{$\lambda_{\max}$} & \multicolumn{1}{|c|}{Condition \#} \\
  \hline
   200 & 10 & 6.61E-08 & 8.70E-07 & 1.32E+01 \\
   200 & 20 & 7.87E-09 & 2.18E-07 & 2.77E+01 \\
   200 & 40 & 1.09E-09 & 4.51E-08 & 4.96E+01 \\
  \hline
\end{tabular}
}
\label{table:suu2d:delta}
}
\caption{Condition number for $S_\Gamma$ in 2D for (a) fixed $\delta$, allowing $h$ to vary, and (b) fixed $h$, allowing $\delta$ to vary. This data is plotted in Figure \ref{fig:suu2d}.} \label{table:suu2d}
\end{table}

\begin{figure}[t]
\centering
\subfigure[Fixed $\delta$, vary $h$. ]{
 \scalebox{0.42}{\includegraphics{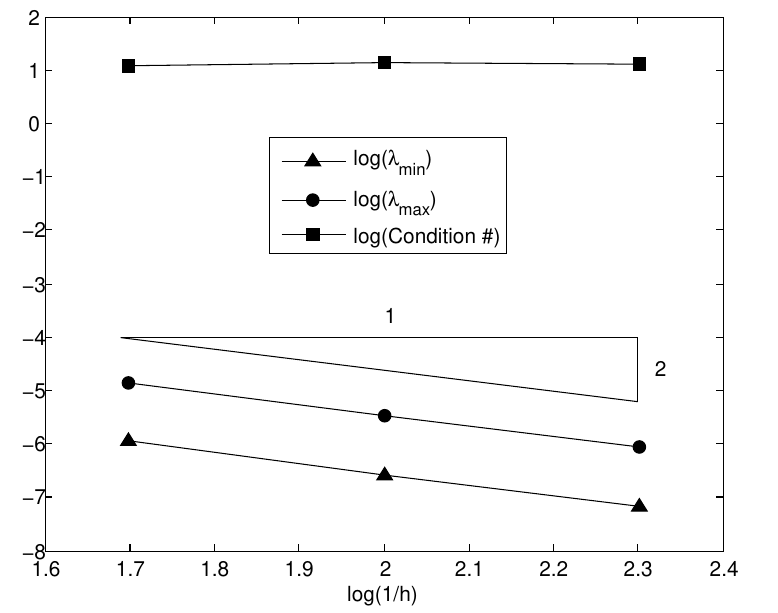}}
 \label{fig:suu2d:h}
}
\subfigure[Fixed $h$, vary $\delta$.]{
 \scalebox{0.42}{\includegraphics{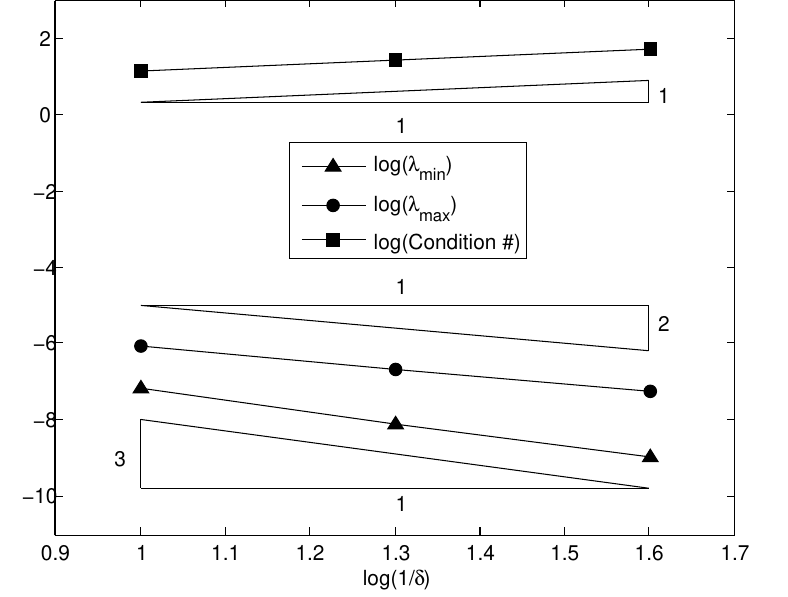}}
 \label{fig:suu2d:delta}
}
\caption{Condition number for $S_\Gamma$ in 2D for (a) fixed $\delta$, allowing $h$ to vary, and (b) fixed $h$, allowing $\delta$ to vary. The condition number of $S_\Gamma$ in 2D is only weakly $h$-dependent, but varies with $\delta^{-1}$. These figures are plotted from data in Table \ref{table:suu2d}.}
\label{fig:suu2d}
\end{figure}

\subsection{Numerical Verification of the Schur Complement Conditioning}
\label{sec:suu_num_expts}

To test the conjecture of the previous section, we discretize the Dirichlet boundary value problem
\begin{align}
 s_i(u_h,v_h) = (b,v_h) \qquad \forall v_h \in \Lambda_h,
\end{align}
with $u_h=0$ on $\mathcal{B}\Omega$, using piecewise constant and piecewise linear shape functions on uniform cartesian mesh, and numerically determine the ratio of the largest and smallest eigenvalues, defining the condition number of the problem.

\subsubsection{Results in One Dimension}

We define the regions $\Omega_1 = (0,0.5-\delta/2)$, $\Omega_2 = (0.5+\delta/2,1)$, and $\Gamma = (0.5-\delta/2,0.5+\delta/2)$, such that $\Gamma$ is always a region of width $\delta$ centered at $x=0.5$. We then compute the largest and smallest eigenvalues of $S_\Gamma$. We show results for both piecewise constant and piecewise linear shape functions to verify that the choice of shape function does not play a role in the conditioning of the discrete system.

We first compute the condition number of $S_\Gamma$ for different $h$ while holding $\delta$ fixed. Our results appear in Tables \ref{table:suu} and Figures \ref{fig:suu}. The minimum and maximum eigenvalues depend linearly on $h$, with a slope of nearly unity. Consequently, the condition number of $S_\Gamma$ is only weakly $h$-dependent. We then compute the condition number of $S_\Gamma$ for different $\delta$ while holding $h$ fixed, and observe that the condition number varies nearly as $\delta^{-1}$, which is better conditioned than the original stiffness matrix $K$, whose condition number varied with $\delta^{-2}$.

\subsubsection{Results in Two Dimensions}

We define the regions $\Omega_1 = (0,0.5-\delta/2)\times(0,1)$, $\Omega_2 = (0.5+\delta/2,1)\times(0,1)$, and $\Gamma = (0.5-\delta/2,0.5+\delta/2)\times(0,1)$, such that $\Gamma$ is always a region of width $\delta$ centered at $x=0.5$. We then compute the largest and smallest eigenvalues of $S_\Gamma$. We consider only piecewise constant shape functions in 2D, having established that the choice of shape function does not affect the conditioning.

We first compute the condition number of $S_\Gamma$ for different $h$ while holding $\delta$ fixed, and observe that minimum and maximum eigenvalues depend linearly on $h$ with a slope of approximately two, and again the condition number of $K$ depends only weakly upon the mesh size. Our results appear in Tables \ref{table:suu2d} and Figures \ref{fig:suu2d}. We then compute the condition number of $S_\Gamma$ for different $\delta$ while holding $h$ fixed, and observe that the condition number again varies as $\delta^{-1}$.


\section{Conclusions and Future Work} \label{sec:conclusions}

\begin{table}[ht]
{\footnotesize
$$
\begin{array}{ccccccc}
\text{{\bf Dim}} & \lambda_{\min}(K) & \lambda_{\max}(K) & \kappa(K) & \lambda_{\min}(S_\Gamma) & \lambda_{\max}(S_\Gamma) & \kappa(S_\Gamma)\\[1ex] \hline \hline \\
\bf{1D} & \mathcal{O}(\delta^{3}) & \mathcal{O}(\delta) & \mathcal{O}(\delta^{-2}) & \mathcal{O}(\delta^{2}) & \mathcal{O}(\delta) & \mathcal{O}(\delta^{-1}) \\[1ex]
\bf{2D} & \mathcal{O}\left(\delta^{4}\right) & \mathcal{O}(\delta^{2})  & \mathcal{O}(\delta^{-2}) & \mathcal{O}(\delta^{3}) & \mathcal{O}(\delta^{2}) & \mathcal{O}(\delta^{-1}) \\[1ex]
\hline
\end{array}
$$
\caption{The $\delta$-quantification of the reported numerical
results.} \label{table:collection}
}
\end{table}

We have presented a variational theory for nonlocal problems, such as \eqref{eq:1DomainPDstrong}. With this theory, we proved the well-posedness of the variational formulation of nonlocal boundary value problems with Dirichlet boundary conditions and practical kernel functions that are relevant to peridynamics.  In addition, we proved a spectral equivalence estimate which leads to a mesh-size independent upper bound for the condition number of the stiffness matrix. The spectral equivalence relies on the upper bound \eqref{eq:upperBound} and the nonlocal Poincar\'{e} inequality \eqref{poincareDelta} for the lower bound, where in both the $\delta$-dependence and dimension dependence have been explicitly quantified. Supporting numerical experiments demonstrated the sharpness of the upper bound \eqref{eq:upperBound} as well as the lower bound \eqref{poincareDelta}. We then constructed a nonlocal domain decomposition framework with associated nonlocal transmission conditions, also proving equivalence between the one-domain and two-domain nonlocal Dirichlet boundary value problems. We defined an energy minimizing extension, analogous to a harmonic extension used in the local case, to analyze the condition number of the nonlocal Schur complement operator. We discretized our two-domain weak form to arrive at a nonlocal Schur complement matrix. Conditioning of the nonlocal Schur complement matrix was explored via numerical studies. We summarize the numerical results in Table \ref{table:collection}. We observe that $\kappa(K)$ and $\kappa(S_\Gamma)$ are only weakly dependent upon the mesh size but vary with $\delta^{-2}$ and $\delta^{-1}$, respectively.

It is interesting to compare the conditioning of the discrete nonlocal problem with the conditioning of the (local) discrete Laplace equation. The condition number of the stiffness matrix for the local discrete Laplace equation varies with $h^{-2}$ \cite[Theorem B.32]{Toselli:2005:DDBook}, and the corresponding Schur complement matrix condition number varies with $h^{-1}$~\cite[Lemma 4.11]{Toselli:2005:DDBook}. For a fixed mesh size $0 < h \ll \delta$, we see from Table \ref{table:collection} that the discrete nonlocal stiffness matrix $K$ varies with $\delta^{-2}$, and the condition number of the corresponding nonlocal Schur complement matrix $S_\Gamma$ varies as $\delta^{-1}$.

Application of an appropriate preconditioner, involving the solution of a coarse problem, reduces the condition number of the Schur complement of the weak classical (local) Laplace operator from $\mathcal{O}\left( (H h)^{-1} \right)$ to $\mathcal{O}\left( (1 + \log(H/h))^2 \right)$, where $H$ is the subdomain size~\cite[Lemma 4.11]{Toselli:2005:DDBook}, \cite[\S4.3.6]{Smith:1996:DD}. One unexplored area involves examining the role of a coarse problem in the nonlocal setting, which has not been considered here. A logical direction would be to expand other substructuring methods to a nonlocal setting, such as Neumann-Dirichlet, Neumann-Neumann, FETI-DP (the dual-primal finite element tearing and interconnecting method) \cite{Farhat:2001:FETI-DP}, or BDDC (balancing domain decomposition by constraints) \cite{Dohrmann:2003:BDDC}. Additional opportunities for future research include addressing convergence analysis for alternative domain decomposition methods not based on substructuring in a nonlocal setting. More fundamental concepts in Schwarz theory such as stable decompositions and local solvers need to be reconstructed for nonlocal problems to support convergence analysis for additive, multiplicative, and hybrid algorithms.

\section*{Acknowledgements} \label{sec:Ack}
The first author thanks Dr. Tadele Mengesha of Louisiana State University for many enlightening discussions.  The authors also acknowledge helpful discussions with Pablo Seleson of Florida State University, and also Dr. Richard Lehoucq of Sandia National Laboratories, and thank him for pointing out the reference \cite{Andreu:2009:pLaplace}.

\bibliographystyle{elsarticle-num}
\bibliography{NonlocalDD}

\end{document}